\numberwithin{equation}{section}
\theoremstyle{plain}
\newtheorem{theorem}{Theorem}[section]
\newtheorem{lemma}[theorem]{Lemma}
\newtheorem{remark}[theorem]{Remark}
\newtheorem{proposition}[theorem]{Proposition}
\newtheorem{fact}[theorem]{Fact}
\newtheorem{definition}[theorem]{Definition}
\newtheorem{example}{Example}
\newcommand{\dR}{dR}
\newcommand{\CD}{C_D}
\newcommand{\PCD}{P_{\CD}}
\newcommand{\thechapterwords}
{ \ifcase \thechapter\or 1\or 2\or 3\or 4\or 5\or
	6\or 7\or 8\or 9\or 10\or 11\fi}
\def\thickhrulefill{\leavevmode \leaders \hrule height 2ex \hfill \kern \z@}
\def\@makechapterhead#1{%
	\vspace*{15\p@}%
	{\parindent \z@ \centering \reset@font
		\thickhrulefill\quad
		\scshape  {\chapnumfont \@chapapp{}}{\chapnumfont \thechapterwords}
		\quad \thickhrulefill
		\par\nobreak
		\vspace*{15\p@}%
		\interlinepenalty\@M
		\hrule
		\vspace*{15\p@}%
		\huge {\bfseries  #1}\par\nobreak
		\par
		\vspace*{15\p@}%
		\hrule
		\vskip 15\p@
}}
\def\@makeschapterhead#1{%
	\vspace*{15\p@}%
	{\parindent \z@ \centering \reset@font
		\thickhrulefill
		\par\nobreak
		\vspace*{15\p@}%
		\interlinepenalty\@M
		\hrule
		\vspace*{15\p@}%
		\Huge \bfseries #1\par\nobreak
		\par
		\vspace*{15\p@}%
		\hrule
		\vskip 30\p@
}}
\DeclareFixedFont{\chapnumfont}{T1}{phv}{b}{n}{20pt}
\DeclareFixedFont{\chapchapfont}{T1}{phv}{b}{n}{16pt}
\DeclareFixedFont{\chaptitfont}{T1}{phv}{b}{n}{24.88pt}
\def\@makechapterhead#1{%
	\vspace*{15\p@}%
	{\parindent \z@ \centering \reset@font
		\thickhrulefill\quad
		\scshape {\chaptitfont\color[rgb]{0.00,0.50,1.00}\@chapapp{}}
		{\chapnumfont \thechapterwords}
		\quad \thickhrulefill
		\par\nobreak
		\vspace*{15\p@}%
		\interlinepenalty\@M
		\hrule
		\vspace*{15\p@}%
		{\Large\bfseries #1}\par\nobreak
		\par
		\vspace*{15\p@}%
		\hrule
		\vskip 30\p@
}}%
\title{On  the geometric quantization of  $\theta$-almost twisted Poisson manifold}
	\author[1]{Nasser Saipele Nansidi}
\author[2,3]{Bertuel Tangue Ndawa}
\author[1]{Joseph Dongho}
\affil[1]{Faculty of Science, University of Maroua, Cameroon.}
\affil[2]{University Institute of Technology, University of Ngaoundere, Cameroon.}
\affil[3]{Institut des Hautes \'{e}tudes Scientifiques, Universit\'{e} Paris-Saclay, France.}
\date{}
\begin{document}
\maketitle
\selectlanguage{english}

The authors declare that they have no known competing financial interests or personal relationships that could have appeared to influence the work reported in this paper.

This research received no specific grant from any funding agency in the public, commercial, or not-for-profit sectors.

	\paragraph{Abstract}

We introduce and investigate the concept of a $\theta$-almost twisted Poisson manifold $(M,\Lambda,\varphi,\theta)$.  This structure consists of a smooth manifold $M$ equipped with a bivector field $\Lambda$, a 3-form $\varphi$, and a closed 1-form $\theta$, satisfying the following conditions: the exterior derivative $d\varphi$ of $\varphi$ equals the wedge product $\theta \wedge \varphi$; the anchor $\Lambda^\#(\theta)$ of $\theta$ vanishes identically; and one-half of the Schouten-Nijenhuis bracket $[\Lambda, \Lambda]$ equals the anchor $\Lambda^\#(\varphi)$ of $\varphi$. This structure generalizes both Poisson and twisted Poisson manifolds, permitting the 3-form $\varphi$ to be non-closed in a way controlled by the 1-form $\theta$.
We construct a Lie-Rinehart algebra on the module of 1-forms $\Omega^1(M)$, giving rise to a cochain complex and an associated cohomology theory called $\theta$-almost twisted Poisson cohomology.  Moreover, we develop the geometric quantization of these manifolds by defining a suitable contravariant derivative, establishing a prequantization condition in terms of the cohomology, and constructing a quantum Hilbert space via polarization. We illustrate our results with several examples, including the computation of the cohomology and quantization on $\mathbb{R}^5$.
	\medskip
	
	Keywords: $\theta$-almost twisted Poisson manifold, Geometric quantization, prequantization, cohomology.

	\section{Introduction}

Poisson structures, introduced by Sim\'{e}on Denis Poisson in the early 19th century, are fundamental objects in symplectic geometry and Hamiltonian mechanics. They are defined by a bivector field $\Lambda$ satisfying the Jacobi identity $[\Lambda,\Lambda]=0$, where $[\ ,\ ]$ denotes the Schouten-Nijenhuis bracket, see \cite{R4}.

In 2001, twisted Poisson structures \cite{R11} emerged as a significant generalization, relaxing the Jacobi identity to $[\Lambda,\Lambda] =2 \Lambda^{\#}(\varphi)$, where $\varphi$ is a closed 3-form. These structures arise naturally in string theory, where the closed 3-form $\varphi$ plays an important role. However, in certain physical contexts \cite{22}, the 3-form $\varphi$ may not be closed, motivating the study of more general structures in which $\varphi$ is allowed to be non-closed, governed by an additional differential form.

In this paper, we introduce and study $\theta$-almost twisted Poisson manifolds, defined as quadruples $(M,\Lambda,\varphi,\theta)$ consisting of a smooth manifold $M$, a bivector $\Lambda$, a 3-form $\varphi$, and a closed 1-form $\theta$, subject to the conditions $d\varphi = \theta \wedge \varphi$, $\Lambda^{\#}(\theta)=0$, and $\frac{1}{2}[\Lambda,\Lambda] = \Lambda^{\#}(\varphi)$. When $\theta=0$, we recover twisted Poisson manifolds; when, in addition, $\varphi=0$, we obtain ordinary Poisson manifolds. The closed 1-form $\theta$ controls the failure of $\varphi$ to be closed, and the condition $\Lambda^{\#}(\theta)=0$ ensures compatibility with the bivector field.

Our primary objective is to construct a Lie-Rinehart algebra on the module of 1-forms of a $\theta$-almost twisted Poisson manifold. This is achieved by introducing a novel bracket $[\ ,\ ]_{\varphi,\theta}$ that generalizes both the Koszul bracket and the bracket used in twisted Poisson geometry \cite{R11,R2}. The associated anchor map $\Lambda^{\#}$ then yields a cochain complex whose cohomology, termed $\theta$-almost twisted Poisson cohomology, generalizes the Lichnerowicz-Poisson and Lichnerowicz-twisted Poisson cohomologies. We explicitly compute this cohomology for a specific example on $\mathbb{R}^5$.

A secondary objective is the development of geometric quantization for $\theta$-almost twisted Poisson manifolds. We define a suitable contravariant derivative on a complex line bundle and establish a prequantization condition in terms of the cohomology class of the bivector $\Lambda$. This condition generalizes the well-known Kostant-Souriau condition for symplectic manifolds \cite{R22} and the condition for twisted Poisson manifolds due to Petalidou \cite{R2}. We then introduce a notion of polarization compatible with the $\theta$-almost twisted Poisson structure and construct the quantum Hilbert space by tensoring the prequantum line bundle with the bundle of half-densities. We illustrate the theory with examples, including a detailed quantization on $\mathbb{R}^5$ with specific choices of $\Lambda$, $\varphi$, and $\theta$.

The paper is organized as follows: Section \ref{sec1} recalls the necessary background on Poisson geometry, the Schouten-Nijenhuis bracket, and Lie-Rinehart algebras. Section \ref{sec2} introduces $\theta$-almost twisted Poisson manifolds, provides illustrative examples, constructs the Lie-Rinehart algebra, and defines the $\theta$-almost twisted Poisson cohomology. Section \ref{sec3} is devoted to geometric quantization: we define the contravariant derivative, introduce the $\theta$-almost twisted Poisson-Chern class, establish the prequantization condition, and construct the Hilbert space via polarization. A detailed example on $\mathbb{R}^5$ is provided.
	
	\section{Tools}\label{sec1}
Throughout this paper, we assume $k \geq 1$, all objects are smooth, and $M$ denotes a manifold. The algebra of smooth functions on $M$ is denoted by $C^\infty(M)$.
For $p \geq 0$, $\Omega^p(M)$ and $\mathfrak{X}^p(M)$ denote the spaces of $p$-forms and $p$-vector fields on $M$, respectively. In particular, $\Omega^0(M) = \mathfrak{X}^0(M) = C^\infty(M)$. We denote $\Omega(M) = \bigoplus_{p \geq 0} \Omega^p(M)$ and $\mathfrak{X}(M) = \bigoplus_{p \geq 0} \mathfrak{X}(M)$, where $\bigoplus$ denotes the direct sum.

	\subsection{Basic definitions}
Let $X\in \mathfrak{X}(M)$, and let $\psi\in\Omega^k(M)$. We denote by $i_X\psi$ the interior derivative of $\psi$ by $X$, which is defined by
\begin{align*}
	i_X\psi(X_1,\dots,X_{k-1})&=\psi(X,X_1,\dots,X_{k-1})
\end{align*}
for every $X_1,\dots,X_{k-1}\in \mathfrak{X}^1(M)$. Observe that the map $i_X:\Omega^k(M)\to\Omega^{k-1}(M)$ is a $C^\infty(M)$-linear endomorphism of $\Omega(M)$ of degree -1.

A bivector $\Lambda\in \mathfrak{X}^{2}(M)$ naturally induces a morphism $\Lambda^\#: \Omega^k(M)\longrightarrow \mathfrak{X}^k(M)$ defined by
\begin{align}\label{Eq3}
	\Lambda^\#(\psi)(\alpha_1,\dots,\alpha_k) &= (-1)^{k}\psi\bigg(\Lambda^\#(\alpha_1),\dots,\Lambda^\#(\alpha_k)\bigg),
\end{align}
for every $\psi\in\Omega^k(M)$ and $\alpha_1,\dots,\alpha_k\in \Omega^1(M)$.
In particular, $\Lambda^\#(f) = f$ for $f \in C^\infty(M)$, and for every $\alpha\in \Omega^1(M)$, $\Lambda^\#(\alpha)$ is a vector field defined by $\Lambda^\#(\alpha)(\beta)=\Lambda(\alpha,\beta)$ for $\beta\in \Omega^1(M)$. We denote $\Lambda^\#(\psi)$ by $\psi^{\#}$ for a differential form $\psi$ on $M$.
	\subsection{Koszul and Schouten brackets}
The Koszul bracket $[\ ,\ ]_K$ associated to a bivector field $\Lambda$ is defined by
$$[\alpha,\beta]_K=\mathcal{L}_{\Lambda^\#(\alpha)}(\beta)- \mathcal{L}_{\Lambda^\#(\beta)}(\alpha)-d(\Lambda(\alpha,\beta)), \quad\alpha,\beta\in \Omega^1(M).$$

The Schouten-Nijenhuis (or Schouten) bracket was introduced by Schouten (\cite{RS}) and Nijenhuis (\cite{RN}). Its definition stems from the following result.
\begin{theorem}\cite[Theorem 2.8]{R9}\label{ui}
	There exists a unique $\mathbb{R}$-bilinear operation $[\ ,\ ]: \mathfrak{X}^{p+1}(M)\times \mathfrak{X}^{q+1}(M)\longrightarrow \mathfrak{X}^{p+q+1}(M)$ for $p,q\geq -1$
	which satisfies the following properties:
	\begin{itemize}
		\item[(i)] When $p=q=0$, it is the usual Lie bracket of vector fields.
		\item[(ii)] When $p=0$ and $q=-1$, it is the Lie derivative $[X,f]=\mathcal{L}_Xf=X(f)$.
		
		\item For $P\in\mathfrak{X}^{p+1}(M)$, $Q\in \mathfrak{X}^{q+1}(M)$, and $R\in \mathfrak{X}^{r+1}(M)$,
		\begin{itemize}
			\item[(iii)] Graded skew-symmetry: $[P,Q]=-(-1)^{pq}[Q,P]$.
			\item[(iv)] Graded Leibniz identity: $[P,Q\wedge R]=[P,R]\wedge R+(-1)^{p(q+1)}Q\wedge[P,R]$.
			\item[(v)] Graded Jacobi identity: the graded Jacobi operator (Jacobiator) $J_{[,]}(P,Q,R)$ of $P,Q,R$ is identically null, where
			$$
			J_{[,]}(P,Q,R) = (-1)^{rp}[P,[Q,R]] + (-1)^{pq}[Q,[R,P]] + (-1)^{qr}[R,[P,Q]].
			$$
		\end{itemize}
	\end{itemize}
\end{theorem}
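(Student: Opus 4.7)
The plan is to split the proof into uniqueness, explicit construction, and verification of the graded Jacobi identity, with (v) being the main hurdle.

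For \textbf{uniqueness}, I would observe that any $(p+1)$-vector is locally a finite $C^\infty(M)$-linear combination of decomposable elements $X_0\wedge\cdots\wedge X_p$ with $X_i\in\mathfrak{X}^1(M)$. By repeated application of the graded Leibniz identity (iv), together with graded skew-symmetry (iii), the bracket $[P,Q]$ reduces to an expression involving only $[X,Y]$ and $[X,f]$ for $X,Y$ vector fields and $f$ a function. Properties (i) and (ii) pin those down uniquely, forcing the whole operation to be unique.

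For \textbf{existence}, I would propose the explicit formula
\[ [X_0\wedge\cdots\wedge X_p,\,Y_0\wedge\cdots\wedge Y_q]=\sum_{i,j}(-1)^{i+j}[X_i,Y_j]\wedge X_0\wedge\cdots\widehat{X_i}\cdots\wedge X_p\wedge Y_0\wedge\cdots\widehat{Y_j}\cdots\wedge Y_q \]
on decomposable multivectors, together with the companion formula
\[ [X_0\wedge\cdots\wedge X_p,\,f]=\sum_{i=0}^{p}(-1)^{p-i}X_i(f)\,X_0\wedge\cdots\widehat{X_i}\cdots\wedge X_p \]
when one argument is a function, and extend $\mathbb{R}$-bilinearly. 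I would then check that the definition is independent of the chosen decomposition---the cleanest route being to verify (iv) directly on decomposables so the extension is forced to be consistent---after which properties (i), (ii) and (iii) follow without difficulty.

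The \textbf{graded Jacobi identity} (v) will be the main obstacle. My strategy is to first verify it when the three arguments are each vector fields or functions, a case in which the Jacobiator collapses to the ordinary Jacobi identity of the Lie bracket and to the compatibility $\mathcal{L}_{[X,Y]}=[\mathcal{L}_X,\mathcal{L}_Y]$. I would then use (iv) to show that, with $P$ and $R$ fixed, the map $Q\mapsto J_{[,]}(P,Q,R)$ is a graded derivation in $Q$ as soon as $J_{[,]}$ vanishes on the remaining slots; iterating, an induction on the total degree $p+q+r$ reduces every case to the already treated vector-field/function base case. The delicate point throughout will be to keep track of the sign conventions appearing in (iii)--(v) during the induction.
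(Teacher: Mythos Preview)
The paper does not actually prove this theorem: it is quoted verbatim as \cite[Theorem 2.8]{R9} and used as background with no argument supplied. So there is no ``paper's own proof'' to compare your proposal against; the authors simply import the Schouten--Nijenhuis bracket as a known tool.

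That said, your outline is the standard route taken in most references (including, in essence, the cited text of Crainic--Fernandes--M\u{a}rcu\c{t}): uniqueness by stripping down via (iii)--(iv) to the base cases (i)--(ii), existence via the explicit formula on decomposables, and Jacobi by induction on total degree after checking the derivation property of the Jacobiator. One point you gloss over is the well-definedness of the explicit formula: a multivector can be written as a sum of decomposables in many ways, and checking that your formula is independent of the representation is not automatic from (iv) alone---the usual clean way is to exhibit the bracket intrinsically, e.g.\ via the identity $i_{[P,Q]}=[[i_P,d],i_Q]$ on $\Omega(M)$ or via local coordinates, and then verify (i)--(iv) afterwards. Your induction for (v) is correct in spirit, but the claim that $Q\mapsto J_{[,]}(P,Q,R)$ is a derivation does not require the vanishing of $J$ on lower degrees; it follows directly from (iii)--(iv), and this is what makes the induction go through cleanly.
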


\subsection{Twisted Poisson manifold}	
\begin{definition}\label{expi2}\cite[pages 2-3]{R2}
	A twisted Poisson manifold is a manifold $M$ endowed with a bivector field $\Lambda$ and a closed 3-form $\varphi$ on $M$ such that
	\begin{equation*}
		\frac{1}{2}[\Lambda,\Lambda] =\Lambda^{\#} (\varphi).
	\end{equation*}
\end{definition}
The proof of the following result follows the same lines as that of Claim, pp. 192 in \cite{R20}.
\begin{proposition}\label{prop1}
	Let $(M,\Lambda,\varphi)$ be a twisted Poisson manifold. For every functions $f,g \in C^\infty(M)$, one has
	\begin{equation*}
		\left[(df)^{\#},(dg)^{\#}\right]-\big(d\{f,g\}\big)^{\#} = (i_{(dg)^{\#}}i_{(df)^{\#}}\varphi)^{\#}.
	\end{equation*}
\end{proposition}

	\subsection{Lie-Rinehart algebra}
Let $R$ be a commutative ring. Let $A$ be an algebra over $R$. We recall that a derivation of $A$ is a morphism $\delta: A\longrightarrow A$ of $R$-modules such that $\delta(ab)=\delta(a)b+a\delta(b)$ for all $a,b\in A$. We denote by $\operatorname{Der}(A)$ the set of derivations of $A$. It is well known that $\operatorname{Der}(A)$ is a Lie algebra over $R$ under the commutator of derivations $[\delta_1,\delta_2]=\delta_1\circ \delta_2-\delta_2\circ \delta_1$. The $R$-module $\operatorname{Der}(A)$ is well known to be an $A$-module if the algebra $A$ is commutative.

\begin{definition}\cite[page 5]{R7}
	Let $R$ be a commutative ring, and let $A$ be a commutative $R$-algebra (not necessarily with unit). Let $(L, [-,-])$ be a Lie algebra over $R$. A Lie-Rinehart algebra structure on $L$ is a Lie algebra homomorphism $\rho: L \longrightarrow \operatorname{Der}(A)$ satisfying the following compatibility properties:
	\begin{enumerate}
		\item $\rho(a\alpha)(b) = a\rho(\alpha)(b)$,
		\item $[\alpha, a\beta] = a[\alpha, \beta] + \rho(\alpha)(a)\beta$,
	\end{enumerate}
	where $a, b \in A$ and $\alpha, \beta \in L$. A Lie-Rinehart algebra is a pair $(L, \rho)$ where $\rho$ is a Lie-Rinehart structure on $L$.
\end{definition}
	
	\section{$\theta$-almost twisted Poisson cohomology}\label{sec2}
	In this section, we first introduce $\theta$-almost twisted Poisson manifolds before presenting their associated cohomology theory.
	\subsection{$\theta$-almost twisted Poisson structures}
	A $\theta$-almost twisted Poisson manifold is a manifold $M$ equipped with  a bivector field $\Lambda$,
	together with a 3-form $\varphi$ and a closed  1-form $\theta$ on $M$, such that the following hold: $d\varphi=\theta\wedge\varphi$,
	$\Lambda^\#(\theta)=0$,  and
	\begin{align}\label{Eq2}
	\dfrac{1}{2}[\Lambda,\Lambda] &=\Lambda^\#  (\varphi),
	\end{align}
	where $[\ ,\ ]$ is the Schouten-Nijenhuis bracket defined from Theorem~\ref{ui}.
	A $\theta$-almost twisted Poisson manifold   will be denoted by $(M,\Lambda,\varphi,\theta)$.
	
	This definition differs from that of a twisted Poisson manifold by relaxing the closure condition on the 3-form $\varphi$ through the introduction of a closed 1-form $\theta$, subject to specific compatibility constraints.
	
	The following examples illustrate $\theta$-almost twisted Poisson manifolds constructed from manifolds equipped with additional structures.\\[0.2cm]

\noindent\textbf{Examples.}
\begin{enumerate}
	\item Every Poisson manifold $(M, \Lambda)$ can be endowed with a twisted Poisson structure $(\Lambda, \varphi)$, and a twisted Poisson manifold $(M, \Lambda, \varphi)$ is a $0$-almost twisted Poisson manifold.
	
	\item Let $(M, \Lambda_0, \varphi_0)$ be a twisted Poisson manifold, and let $f \in C^\infty(M, \mathbb{R})$ be a Casimir function, i.e., $\Lambda_0^{\#}(df) = 0$. For $\Lambda = f\Lambda_0$, $\theta = -f^{-1} df$, and $\varphi = f^{-1} \varphi_0$, the quadruple $(M, \Lambda, \varphi, \theta)$ is a $\theta$-almost twisted Poisson manifold. A similar example can be constructed on $M \times \mathbb{R}$ with $\Lambda = e^t \Lambda_0$, $\varphi = e^{-t} \varphi_0$, and $\theta = -dt$, where $t$ is the canonical coordinate on $\mathbb{R}$. Moreover, $\varphi$ is non-closed on $M \times \mathbb{R}$.
	
	\item \label{exem1.1.5} Let $(x_1, x_2, x_3, x_4, x_5)$ be a coordinate system on $\mathbb{R}^5$. Let $\theta = dx_5$, $\Lambda = f \partial_{x_1} \wedge \partial_{x_2} + g \partial_{x_3} \wedge \partial_{x_4}$, and
	\begin{align*}
		\varphi &= (\partial_{x_1} g^{-1}) dx_1 \wedge dx_3 \wedge dx_4 + (\partial_{x_2} g^{-1}) dx_2 \wedge dx_3 \wedge dx_4 \\
		&\quad + (\partial_{x_5} g^{-1} - g^{-1}) dx_3 \wedge dx_4 \wedge dx_5 + (\partial_{x_3} f^{-1}) dx_1 \wedge dx_2 \wedge dx_3 \\
		&\quad + (\partial_{x_4} f^{-1}) dx_1 \wedge dx_2 \wedge dx_4 + (\partial_{x_5} f^{-1} - f^{-1}) dx_1 \wedge dx_2 \wedge dx_5,
	\end{align*}
	where $f$ and $g$ are nowhere vanishing functions on $\mathbb{R}^5$. Then $(\Lambda, \varphi, \theta)$ defines a $\theta$-almost twisted Poisson structure on $\mathbb{R}^5$, and $\varphi$ is non-closed.
\end{enumerate}

		\begin{fact}Let $m \leq 4$. If $(\Lambda, \varphi, \theta)$ is a $\theta$-almost twisted Poisson structure on an $m$-dimensional manifold $M$, then $\varphi$ is closed; that is, $(\Lambda, \varphi)$ is a twisted Poisson structure on $M$. Conversely, a twisted Poisson structure $(\Lambda, \varphi)$ on $M$ defines a $\theta$-parameter family $\left\{(\Lambda, \varphi, \theta) \mid \Lambda^{\#}(\theta)=0\right\}$ of $\theta$-almost twisted Poisson structures. In other words, for a fixed $m$-dimensional manifold $M$, there is a one-to-one correspondence between the set of twisted Poisson structures and the set of $\theta$-parameter families of $\theta$-almost twisted Poisson structures.
		\end{fact}

	\subsection{$\theta$-almost twisted Poisson complex}

For a twisted Poisson manifold $(M,\Lambda,\varphi)$, the associated bracket $[\ ,\ ]_{\varphi}$ defines a Lie algebra structure on $\Omega^1(M)$. Moreover, the pair $(\Omega^1(M),\Lambda^{\#})$ forms a Lie-Rinehart algebra, which induces a cochain complex whose associated cohomology is called Lichnerowicz-twisted Poisson cohomology (see \cite{R11,R2,R20}).

This subsection is devoted to defining $\theta$-almost twisted Poisson cohomology.

Let $[\ ,\ ]_{\varphi,\theta}: \Omega^1(M) \times \Omega^1(M) \longrightarrow \Omega^1(M)$ be the bracket associated with a $\theta$-almost twisted Poisson manifold $(M, \Lambda, \varphi, \theta)$, defined by
\begin{equation}
	[\alpha, \beta]_{\varphi, \theta} = [\alpha, \beta]_K + i_{\Lambda^{\#}(\beta)} i_{\Lambda^{\#}(\alpha)} \varphi + \Lambda(\alpha, \beta) \cdot \theta. \label{expresscrochet}
\end{equation}

This bracket induces a Lie-Rinehart algebra on $\Omega^1(M)$. Before proving this statement, let us establish some preliminary results.

We remark that the map
			\begin{eqnarray}
				\{\ ,\ \}: C^\infty(M)\times (C^\infty(M)&\longrightarrow& (C^\infty(M),\quad   \{f,g\}\mapsto \Lambda(df,dg)
			\end{eqnarray}is $\mathbb{R}$-bilinear and skew-symmetric. Moreover, for every $f,g,h \in C^\infty(M)$, we have
			\begin{align}\label{Jac}
				\{f,\{g,h\}\}+\{g,\{h,f\}\}+\{h,\{f,g\}\}&=\Lambda^{\#}(\varphi)(df,dg,dh).
			\end{align}

The pair $(C^\infty(M), \{ , \} )$ will be called a $\theta$-almost $\varphi$-twisted Poisson algebra.
		\begin{proposition}\label{inc4}
			Let  $\alpha, \beta\in\Omega^1(M)$, and $f,g \in C^\infty(M)$. We have
			\begin{align}\label{inc1}
				[f\alpha,g\beta]_{\varphi,\theta}&=fg[\alpha,\beta]_{\varphi,\theta}+\Lambda^{\#}(f\alpha)(g)\beta-\Lambda^{\#}(g\beta)(f)\alpha.
			\end{align}
		\end{proposition}
		\begin{proof}
			From \eqref{expresscrochet}, we have:  \begin{align*}
				[f\alpha,g\beta]_{\varphi,\theta}&=[f\alpha,g\beta]_K +
				i_{\Lambda^{\#}(g\beta)}i_{\Lambda^{\#}(f\alpha)}\varphi +
				\Lambda(f\alpha,g\beta)\cdot
				\theta\\
				&=\mathcal{L}_{\Lambda^{\#}(f\alpha)}(g\beta)-\mathcal{L}_{\Lambda^{\#}(g\beta)}(f\alpha)-d\Lambda(f\alpha,g\beta)\\
				&\quad+fgi_{\Lambda^{\#}(\beta)}i_{\Lambda^{\#}(\alpha)}\varphi +
				fg\Lambda(\alpha,\beta)\cdot
				\theta\\
				&=i_{\Lambda^{\#}(f\alpha)}d(g\beta)-i_{\Lambda^{\#}(g\beta)}d(f\alpha)+d\Lambda(f\alpha,g\beta)\\
				&\quad+fgi_{\Lambda^{\#}(\beta)}i_{\Lambda^{\#}(\alpha)}\varphi +
				fg\Lambda(\alpha,\beta)\cdot \theta\\
				 &=fg\big[i_{\Lambda^{\#}(\alpha)}d(\beta)-i_{\Lambda^{\#}(\beta)}d(\alpha)+d\Lambda(\alpha,\beta)\big]+\Lambda^{\#}(f\alpha)(g)\beta\\
				&\quad-\Lambda^{\#}(g\beta)(f)\alpha+
				fgi_{\Lambda^{\#}(\beta)}i_{\Lambda^{\#}(\alpha)}\varphi +
				fg\Lambda(\alpha,\beta)\cdot \theta\\
				&=fg[\alpha,\beta]_{\varphi,\theta}+\Lambda^{\#}(f\alpha)(g)\beta-\Lambda^{\#}(g\beta)(f)\alpha.
			\end{align*}
		This completes the proof.
		\end{proof}
		The bracket $[\ ,\ ]_{\varphi,\theta}$ is related to the anchor map $\Lambda^{\#}$. The precise formulation is as follows:
		\begin{proposition}\label{inc2}
		For every $f,g,u ,v,c\in C^\infty(M)$, we have
			\begin{align*}
				\Lambda^{\#}\big([udf,vdg]_{\varphi,\theta}\big)&=[\Lambda^{\#}(udf),\Lambda^{\#}(vdg)],
			\end{align*}
		\end{proposition}
		\begin{proof}
			Let $f,g,u ,v,c\in C^\infty(M)$. We get
			\begin{align}\label{inc3}
				\nonumber
				\Lambda^{\#}\big([udf,vdg]_{\varphi,\theta}\big)(c)&=\Lambda^{\#}\Big(uv[df,dg]_ {\varphi,\theta}+\Lambda^{\#}(udf)(v)dg-\Lambda^{\#}(vdg)(u)df\Big)(c)\\ \nonumber
				&=uv\Lambda^{\#}\Big([df,dg]_
				{\varphi,\theta}\Big)(c)+\Lambda^{\#}(udf)(v)\Lambda^{\#}(dg)(c)\\ \nonumber
				&\quad-\Lambda^{\#}(vdg)(u)\Lambda^{\#}(df)(c)\\ \nonumber
				&=uv\Lambda^{\#}\big(d\{df,dg\}+i_{\Lambda^{\#}(dg)}i_{\Lambda^{\#}(df)}\varphi\big)(c)\\ \nonumber
				&\quad+\Lambda^{\#}(udf)(v)\Lambda^{\#}(dg)(c)-\Lambda^{\#}(vdg)(u)\Lambda^{\#}(df)(c)\\ \nonumber
				&=uv[\Lambda^{\#}(df),\Lambda^{\#}(dg)](c)+\Lambda^{\#}(udf)(v)\Lambda^{\#}(dg)(c)\\
				&\quad-\Lambda^{\#}(vdg)(u)\Lambda^{\#}(df)(c)\\  \nonumber
				&=[\Lambda^{\#}(udf),\Lambda^{\#}(vdg)](c)\end{align}
			where equality \eqref{inc3} follows from Proposition \ref{prop1}.
			The proposition is then proved.
			\end{proof}
			
			In what follows, we define $\eta_{f,g,h}=i_{(df)^{\#}}i_{(dg)^{\#}}i_{(dh)^{\#}}\varphi$ and $\eta_{f,g}=i_{(df)^{\#}}i_{(dg)^{\#}}\varphi$ for every $f, g, h \in C^\infty(M)$. The following two results are crucial for defining a Lie-Rinehart structure on $\Omega^1(M)$, as they provide the necessary conditions for such a structure.
		\begin{lemma}\label{lemm1} For every real functions $f$, $g$, and $h$ on $M$,
			$$[df,\eta_{h,g}]_{\varphi,\theta}=i_{(df)^{\#}}d\eta_{h,g}+d\eta_{f,h,g}+ i_{[(dg)^{\#},(dh)^{\#}]}i_{(df)^{\#}}\varphi-\eta_{\{g,h\},f}+\eta_{f,h,g}\theta.$$
		\end{lemma}
		\begin{proof}	
			From \eqref{expresscrochet}, it follows that
			\begin{align*}
				[df,\eta_{h,g}]_{\varphi,\theta} &=[df,\eta_{h,g}]_K+i_{(\eta_{h,g})^{\#}}i_{(df)^{\#}}\varphi +\Lambda(df,\eta_{h,g})\theta\\
				&=\mathcal{L}_{(df)^{\#}}(\eta_{h,g})-\mathcal{L}_{(\eta_{h,g})^{\#}}(df)-d\Lambda(df,\eta_{h,g})
				+i_{(\eta_{h,g})^{\#}}i_{(df)^{\#}}\varphi +\eta_{f,h,g}\theta\\
				&=i_{(df)^{\#}}d\eta_{h,g}+di_{(df)^{\#}}\eta_{h,g}-di_{(\eta_{h,g})^{\#}}df-d\Lambda(df,\eta_{h,g})\\
				&\quad+i_{(\eta_{h,g})^{\#}}i_{(df)^{\#}}\varphi+\eta_{f,h,g}\theta\\
				&=i_{(df)^{\#}}d\eta_{h,g}+di_{(df)^{\#}}\eta_{h,g}+d\Lambda(df,\eta_{h,g})-d\Lambda(df,\eta_{h,g})\\
				&\quad+i_{(\eta_{h,g})^{\#}}i_{(df)^{\#}}\varphi+\eta_{f,h,g}\theta\\
				&=i_{(df)^{\#}}d\eta_{h,g}+di_{(df)^{\#}}\eta_{h,g}+i_{(\eta_{h,g})^{\#}}i_{(df)^{\#}}\varphi+\eta_{f,h,g}\theta.
			\end{align*}
			By Proposition~\ref{prop1}, we have $$(\eta_{h,g})^{\#}=[(dg)^{\#},(dh)^{\#}]-\big(d\{g,h\}\big)^{\#}.$$
			Therefore,
			\begin{equation}\label{sr3}
				\begin{array}{l}
					[df,\eta_{h,g}]_{\varphi,\theta}= i_{(df)^{\#}}d\eta_{h,g}+di_{(df)^{\#}}\eta_{h,g}+i_{[(dg)^{\#},(dh)^{\#}]}i_{(df)^{\#}}\varphi
					\\\hspace{2cm}-i_{(d\{h,g\})^{\#}}i_{(df)^{\#}}\varphi+\eta_{f,h,g}\theta
					i_{(df)^{\#}}d\eta_{h,g}+d\eta_{f,h,g}\\\hspace{2cm}+ i_{[(dg)^{\#},(dh)^{\#}]}i_{(df)^{\#}}\varphi-\eta_{\{g,h\},f}+\eta_{f,h,g}\theta.
				\end{array}
			\end{equation}	
			This completes the proof of Lemma~\ref{lemm1}.
			
		\end{proof}\par
		\begin{lemma}\label{lem2} For all  functions $f$, $g$ and $h$   on $M$, we have
			\begin{align*}
				[df,[dg,dh]_{\varphi,\theta}]_{\varphi,\theta} & =d\{f,\{g,h\}\}+\{f,\{g,h\}\}\theta+\eta_{\{g,h\},f}\\
				&\quad+[df,\eta_{h,g}]_{\varphi,\theta}+[df,\{g,h\}\theta]_{\varphi,\theta}.
			\end{align*}	
		\end{lemma}
		\begin{proof}
			From \eqref{expresscrochet}, we have
			\begin{align*}
				[dg,dh]_{\varphi,\theta} &= [dg,dh]_K+i_{(dh)^{\#}} i_{(dg)^{\#}}\varphi+ \Lambda(dg,dh) \theta\\
				&= d\{g,h\}+\eta_{h,g}+\{g,h\}\theta.
			\end{align*}
			Then,
			\begin{align*}
				\nonumber 
				[df,[dg,dh]_{\varphi,\theta}]_{\varphi,\theta}
				&= [df,d\{g,h\}]_{\varphi,\theta}+[df,\eta_{h,g}]_{\varphi,\theta} + [df,\{g,h\}\theta]_{\varphi,\theta} \\ \nonumber
				&=[df,d\{g,h\}]_K+i_{(d\{g,h\})^{\#}}i_{(df)^{\#}}\varphi +[df,\eta_{h,g}]_{\varphi,\theta}\\ \nonumber
				&\quad +\Lambda(df,d\{g,h\})\theta+[df,\{g,h\}\theta]_{\varphi,\theta}\nonumber\\
				&=d\{f,\{g,h\}\}+\eta_{\{g,h\},f}+\{f,\{g,h\}\}\theta \nonumber\\
				&\quad+[df,\eta_{h,g}]_{\varphi,\theta}+[df,\{g,h\}\theta]_{\varphi,\theta}.\nonumber
			\end{align*}
		\end{proof}
		Now we have the tools to prove the main result of this subsection, which is stated precisely as follows:
		\begin{theorem}\label{prop3}
			Let $(M,\Lambda,\varphi,\theta)$ be a $\theta$-almost twisted Poisson manifold. The bracket $[\ ,\ ]_{\varphi,\theta}$ defined in \eqref{expresscrochet} is  a  Lie bracket on $\Omega^1(M)$. Moreover, the morphism  $\Lambda^{\#} :\Omega^1(M)\longrightarrow \mathfrak{X}^1(M)$ defined  in $\eqref{Eq3}$ is  a Lie-Rinehart  structure on $\Omega^1(M)$.
		\end{theorem}
		\begin{proof}~
			\begin{itemize}
				\item Let us first prove that the bracket $[\ ,\ ]_{\varphi,\theta}$ defines a Lie algebra structure on $\Omega^1(M)$.
				The $\mathbb{R}$-bilinearity and skew-symmetry of the bracket $[\ ,\ ]_{\varphi,\theta}$ follow directly from its definition.
				We will now prove the Jacobi identity. Using equality \eqref{inc1} and Proposition \ref{prop1}, we obtain
				$$
				 [udf,[vdg,wdh]_{\varphi,\theta}]_{\varphi,\theta}+\circlearrowleft=uvw\big([df,[dg,dh]_{\varphi,\theta}]_{\varphi,\theta}+\circlearrowleft\big)
				$$
				for every $f,g,h,u,v,w\in C^\infty(M)$, where $\circlearrowleft$ indicates a cyclic summation over $f, g$, and $h$. Therefore, it suffices to prove the Jacobi identity for $[\ ,\ ]_{\varphi,\theta}$ on Pfaffian forms.
				
				From \eqref{Jac}, we get
				\begin{align}\label{S1}
					0 &=dJ_{\{\ ,\ \}}(f,g,h)-d\Lambda^{\#}(\varphi)(df,dg,dh),
				\end{align}
				where $J_{\{\ ,\ \}}(f,g,h)=\{f,\{g,h\}\}+\{g,\{h,f\}\}+\{h,\{f,g\}\}$.
				Applying Lemma \ref{lem2} and using Equation \eqref{S1}, we have
				\begin{align}\label{sr2}
					\nonumber
					0 &= [df,[dg,dh]_{\varphi,\theta}]_{\varphi,\theta}+[dg,[dh,df]_{\varphi,\theta}]_{\varphi,\theta}+[dh,[df,dg]_{\varphi,\theta}]_{\varphi,\theta}\\ \nonumber
					&\quad-[df,\eta_{h,g}]_{\varphi,\theta}-[dg,\eta_{f,h}]_{\varphi,\theta}-[dh,\eta_{g,f}]_{\varphi,\theta}\\ \nonumber
					&\quad-\eta_{\{g,h\},f}-\eta_{\{h,f\},g}-\eta_{\{f,g\},h}-d\Lambda^{\#}(\varphi)(df,dg,dh)\\ \nonumber
					&\quad-\{f,\{g,h\}\}\theta-\{g,\{h,f\}\}\theta-\{h,\{f,g\}\}\theta\\
					&\quad-[df,\{g,h\}\theta]_{\varphi,\theta}-[dg,\{h,f\}\theta]_{\varphi,\theta}-[dh,\{f,g\}\theta]_{\varphi,\theta}.
				\end{align}
				From \eqref{expresscrochet}, we have
				\begin{align}\label{sr1}
					\nonumber 
					[df,\{g,h\}\theta]_{\varphi,\theta} &=[df,\{g,h\}\theta]_K+i_{(\{g,h\}\theta)^{\#}}i_{(df)^{\#}}\varphi +\Lambda(df,\{g,h\}\theta)\theta\\ \nonumber
					&=\mathcal{L}_{(df)^{\#}}(\{g,h\}\theta)-\mathcal{L}_{(\{g,h\}\theta)^{\#}}(df)-d\Lambda(df,\{g,h\}\theta)\\ \nonumber
					&\quad+i_{\{g,h\}\theta)^{\#}}i_{(df)^{\#}}\varphi +\Lambda(df,\{g,h\}\theta)\theta\\ \nonumber
					&=i_{(df)^{\#}}d(\{g,h\}\theta)+ di_{(df)^{\#}}(\{g,h\}\theta)+\{g,h\}i_{\theta^{\#}}i_{(df)^{\#}}\varphi \\ \nonumber
					&\quad-\{g,h\}\theta^{\#}(df)\theta\\
					&=\{f,\{g,h\}\}\theta.
				\end{align}
				In the same way, we also have
				\begin{align}\label{sr10}
					[dg,\{h,f\}\theta]_{\varphi,\theta} &= \{g,\{h,f\}\}\theta,
				\end{align}
				and
				\begin{align} \label{sr11}
					[dh,\{f,g\}\theta]_{\varphi,\theta} &= \{h,\{f,g\}\}\theta.
				\end{align}
				
				According to Lemma~\ref{lemm1},  relations \eqref{sr1}, \eqref{sr10}, and  \eqref{sr11},  equality \eqref{sr2} becomes
				\begin{equation}\label{sr6}
					\begin{array}{l}
						 0=[df,[dg,dh]_{\varphi,\theta}]_{\varphi,\theta}+[dg,[dh,df]_{\varphi,\theta}]_{\varphi,\theta}+[dh,[df,dg]_{\varphi,\theta}]_{\varphi,\theta}\\
						\hspace{0.7cm}-i_{(df)^{\#}}d\eta_{h,g}-d\eta_{f,h,g}-i_{[(dg)^{\#},(dh)^{\#}]}i_{(df)^{\#}}\varphi\\
						\hspace{0.7cm}- i_{(dg)^{\#}}d\eta_{f,h}-d\eta_{g,f,h}-i_{[(dh)^{\#},(df)^{\#}]}i_{(dg)^{\#}}\varphi\\
						\hspace{0.7cm}-i_{(dh)^{\#}}d\eta_{g,f}-d\eta_{h,g,f}-i_{[(df)^{\#},(dg)^{\#}]}i_{(dh)^{\#}}\varphi\\
						\hspace{0.7cm}-d\pi^{\#}(\varphi)(df,dg,dh)-\eta_{f,h,g}\theta.
					\end{array}
				\end{equation}
				Thus, for every  vector field $X$ on $M$, we have	
				\begin{equation}
					\begin{array}{l}
						0=i_{X}\Big([df,[dg,dh]_{\varphi,\theta}]_{\varphi,\theta}+[dg,[dh,df]_{\varphi,\theta}]_{\varphi,\theta}\\
						\hspace{0.7cm}+[dh,[df,dg]_{\varphi,\theta}]_{\varphi,\theta} \Big)-i_{X}i_{(df)^{\#}}d\eta_{h,g} \\
						\hspace{0.7cm}-i_{X}d\eta_{f,h,g}-i_{X}\Big(i_{[(dg)^{\#},(dh)^{\#}]}i_{(df)^{\#}}\varphi\Big)\\
						\hspace{0.7cm}- i_{X}i_{(db)^{\#}}d\eta_{f,h}-i_{X}d\eta_{g,fh}-i_{X}\Big(i_{[(dh)^{\#},(df)^{\#}]}i_{(dg)^{\#}}\varphi\Big)\\
						 \hspace{0.7cm}-i_{X}i_{(dh)^{\#}}d\eta_{g,f}-i_{X}d\eta_{h,g,f}-i_{X}\Big(i_{[(df)^{\#},(dg)^{\#}]}i_{(dh)^{\#}}\varphi\Big)\\
						\hspace{0.7cm}-i_{X}\big(d\pi^{\#}(\varphi)(df,dg,dh)\big)-i_{X}\big(\eta_{f,h,g}\cdot\theta\big).
					\end{array}\label{sr7}	
				\end{equation}
				Furthermore,
				\begin{align}
						i_{X}i_{(df)^{\#}}d\eta_{h,g}\nonumber &=(df)^{\#}\Big(\eta_{h,g}(X)\Big)-X\Big(\eta_{h,g}\big((df)^{\#}\big)\Big)\\\nonumber
						&\quad-
						\eta_{h,g}\Big(\big[(df)^{\#},X\big]\Big)\\
						&\begin{array}{l}=(df)^{\#}\Big(\varphi\big((dg)^{\#},(dh)^{\#},X\big)\Big)\\
							\hspace{0.4cm}-
							 X\Big(\varphi\big((df)^{\#},(dg)^{\#},(dh)^{\#}\big)\Big)\\\hspace{0.4cm}-\varphi\Big(\big[(df)^{\#},X\big],(dg)^{\#},(dh)^{\#}\Big).
						\end{array}\label{ss1}	
					\end{align}
					In the same way, we have
					\begin{align}\label{ss2}
						\begin{array}{l}i_{X}i_{(dg)^{\#}}d\eta_{f,h}= (dg)^{\#}\Big(\varphi\big((dh)^{\#},(df)^{\#},X\big)\Big)\\
							\hspace{2.5cm}-
							X\Big(\varphi\big((dg)^{\#},(dh)^{\#},(df)^{\#}\big)\Big)\\
							\hspace{2.5cm}
							-\varphi\Big(\big[(dg)^{\#},X\big],(dh)^{\#},(df)^{\#}\Big),
						\end{array}
					\end{align}
					and
					\begin{align}\label{ss3}
						\begin{array}{l}i_{X}i_{(dh)^{\#}}d\eta_{g,f}=(dh)^{\#}\Big(\varphi\big((df)^{\#},(dg)^{\#},X\big)\Big)\\
							\hspace{2.5cm}-
							X\Big(\varphi\big((dh)^{\#},(df)^{\#},(dg)^{\#}\big)\Big)\\
							\hspace{2.5cm}-\varphi\Big(\big[(dh)^{\#},X\big],(df)^{\#},(dg)^{\#}\Big).
						\end{array}
					\end{align}
					From \eqref{sr7},   \eqref{ss1}, \eqref{ss2},  and  \eqref{ss3}, we obtain
					\begin{align*}
						 0&=i_{X}\Big([df,[dg,dh]_{\varphi,\theta}]_{\varphi,\theta}+[dg,[dh,df]_{\varphi,\theta}]_{\varphi,\theta}+[dh,[df,dg]_{\varphi,\theta}]_{\varphi,\theta} \Big) \\
						&\quad-d\varphi\big((df)^{\#},(dg)^{\#},(dh)^{\#},X\big)+\theta\wedge\varphi\big((df)^{\#},(dg)^{\#},(dh)^{\#},X\big)\\
						 &=i_{X}\Big([df,[dg,dh]_{\varphi,\theta}]_{\varphi,\theta}+[dg,[dh,df]_{\varphi,\theta}]_{\varphi,\theta}+[dh,[df,dg]_{\varphi,\theta}]_{\varphi,\theta} \Big).
					\end{align*}
					This prove the Jacobi identity of $[\ ,\ ]_{\varphi,\theta}$.
					\item  Proposition \ref{inc4} shows  that  $[\alpha,f\beta]_{\varphi,\theta}=f[\alpha,\beta]_{\varphi,\theta}+\Lambda^{\#}(\alpha)(f).\beta $,
					for every  1-forms $\alpha$, $\beta$, and function $f$ on $M$.
					\item  Proposition \ref{inc2}  establishes  that the  $C^\infty(M)$-module $\Omega^1(M)$  endowed with a bracket $[\ ,\ ] _{\varphi,\theta}$
					is a  $\mathbb{R}$-Lie algebra homomorphic to  $\mathfrak{X}^1(M)$ endowed with its natural Lie bracket.
				\end{itemize}
				This ends the proof.
			\end{proof}

			The key ingredient in defining the $\theta$-almost twisted Poisson cohomology of $(M,\Lambda,\varphi,\theta)$ is the differential operator $\partial_{\varphi,\theta}$ associated with $\mathfrak{X}(M)$, defined as follows:
			$$
			\partial_{\varphi,\theta}^n:\mathfrak{X}^{n}(M) \longrightarrow  \mathfrak{X}^{n+1}(M), \quad v \longmapsto \partial_{\varphi,\theta}^n(v)
			$$
			with
			\begin{equation}\label{expi1}
				\begin{array}{r}
					\partial_{\varphi,\theta}^n(v)(\alpha_1,\dots,\alpha_{n+1}) = \sum_{i=1}^{n+1}(-1)^{i-1}(\alpha_i)^{\#}(v(\alpha_1,\dots,\widehat{\alpha_i},\dots,\alpha_{n+1}))\\  \\+ \sum_{1\leq i<j\leq n+1}(-1)^{i+j}v([\alpha_i,\alpha_j]_{\varphi,\theta},\alpha_1,\dots,\widehat{\alpha_i},\dots,\widehat{\alpha_j},\dots,\alpha_{n+1})
				\end{array}
			\end{equation}
			for every $\alpha_1,\dots,\alpha_{n+1}\in\Omega^1(M)$. The Jacobi identity for the bracket $[\ ,\ ]_{\varphi,\theta}$ ensures that $\partial_{\varphi,\theta} \circ \partial_{\varphi,\theta} = 0$, a crucial property for the cohomology theory to be well-defined. As is standard, terms with a hat symbol are omitted.

			\begin{definition}  The $\theta$-almost twisted Poisson cohomology of $(M,\Lambda,\varphi,\theta)$ is defined as the cohomology of the complex $(\mathfrak{X}(M), \partial_{\varphi,\theta})$, denoted by $H_{\theta-atP}^{*}(M,\Lambda,\varphi,\theta)$ or simply $H_{\theta-atP}^{*}(M)$ when there is no ambiguity. Specifically, for every positive integer $n$,
				\begin{equation}\label{t4}
					H_{\theta-atP}^{n}(M)=\dfrac{ker\Big(\partial_{\varphi,\theta}^{n}:\mathfrak{X}^{n}(M)\to \mathfrak{X}^{n+1}(M)\Big)}{Im\Big(\partial_{\varphi,\theta}^{n-1}:
						\mathfrak{X}^{n-1}(M)\to \mathfrak{X}^{n}(M)\Big)}
				\end{equation}
				where $\mathfrak{X}^{-1}(M)=\{0\}$ by convention. The cohomology class of any element $\mu \in \ker(\partial_{\varphi,\theta}^{n}:\mathfrak{X}^{n}(M)\to \mathfrak{X}^{n+1}(M))$ is denoted by $[\mu]_{\varphi,\theta}$.		
			\end{definition}
			
			Analogously to the approach in \cite[pages 41-43]{R19}, we have $\partial_{\varphi,\theta}\circ \Lambda^{\#}=-\Lambda^{\#}\circ d$. Then the following holds.
			\begin{proposition} Let $(M,\Lambda,\varphi,\theta)$ be a $\theta$-almost twisted Poisson manifold. The de Rham cohomology of $M$ is denoted by $H_{dR}^{*}(M,\mathbb{R})$. The cochain complex homomorphism $\Lambda^{\#}: (\Omega(M),d)\longrightarrow (\mathfrak{X}(M), \partial_{\varphi,\theta})$ induces a homomorphism in cohomology, also denoted by $\Lambda^{\#}$,
				\begin{equation}\label{ex1}
					\Lambda^{\#}: H_{dR}^{*}(M,\mathbb{R}) \longrightarrow  H_{\theta-atP}^{*}(M),\
					[\mu] \longmapsto [\Lambda^{\#}(\mu)]_{\varphi,\theta}.
				\end{equation}
				Moreover, if $\Lambda$ is nondegenerate, then $\Lambda^{\#}: H_{dR}^{*}(M,\mathbb{R}) \longrightarrow  H_{\theta-atP}^{*}(M)$ is an isomorphism.
			\end{proposition}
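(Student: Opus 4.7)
My plan is to derive both assertions of the proposition directly from identity \eqref{Eq11} combined with the algebraic properties of $\Lambda^\#$.

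First, I would establish the chain-map property. Equation \eqref{Eq11}, which reads $\partial_{\varphi,\theta}\circ\Lambda^\#=-\Lambda^\#\circ d$, says that $\Lambda^\#$ intertwines the two differentials up to a global sign. Since a sign is irrelevant for computing kernels and images, $\Lambda^\#$ sends $d$-cocycles to $\partial_{\varphi,\theta}$-cocycles and $d$-coboundaries to $\partial_{\varphi,\theta}$-coboundaries: if $d\mu=0$ then $\partial_{\varphi,\theta}(\Lambda^\#(\mu))=-\Lambda^\#(d\mu)=0$, while $\mu=d\nu$ gives $\Lambda^\#(\mu)=-\partial_{\varphi,\theta}(\Lambda^\#(\nu))$. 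Hence the assignment $[\mu]\longmapsto[\Lambda^\#(\mu)]_{\varphi,\theta}$ is independent of the chosen representative and yields a well-defined $\mathbb{R}$-linear map of cohomology groups, which is the content of \eqref{ex1}.

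Second, I would treat the nondegenerate case. The nondegeneracy of $\Lambda$ is equivalent to the statement that $\Lambda^\#:\Omega^1(M)\to\mathfrak{X}^1(M)$ is a $C^\infty(M)$-module isomorphism. From the extension formula \eqref{Eq3}, a fibrewise multilinear-algebra argument shows that in every degree $k\geq 0$ the map $\Lambda^\#:\Omega^k(M)\to\mathfrak{X}^k(M)$ agrees, up to the sign $(-1)^k$, with the $k$-th exterior power of this degree-one isomorphism. Consequently each such map is itself a module isomorphism. Combined with the chain-map property of the previous step, this upgrades $\Lambda^\#$ to an isomorphism of cochain complexes, which descends to an isomorphism $H_{dR}^{*}(M,\mathbb{R})\to H_{\theta-atP}^{*}(M)$ in cohomology.

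The main obstacle I expect lies in Step 2, specifically the bookkeeping required to confirm that \eqref{Eq3} really does reduce the higher-degree $\Lambda^\#$ to the $k$-th exterior power of the degree-one map (up to a harmless sign). The cochain-map aspect in Step 1 comes essentially for free from \eqref{Eq11}, so the remaining substance of the proposition is this multilinear lifting, which is routine but requires careful tracking of the sign conventions of \eqref{Eq3} to ensure that bijectivity is preserved across all degrees and that no freedom in the sign interferes with the passage to cohomology.
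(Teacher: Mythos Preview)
Your proposal is correct and matches the paper's implicit approach: the paper states the proposition as an immediate consequence of \eqref{Eq11} without giving any further argument, and your two steps (the anti-chain-map property from \eqref{Eq11}, then degreewise bijectivity of $\Lambda^\#$ from nondegeneracy via \eqref{Eq3}) are exactly the details one would supply to justify that ``Thus''. There is nothing missing or divergent.
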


\begin{example}

			Let  $(x_1,x_2,x_3,x_4,x_5)$ be a   coordinate system on  $\mathbb{R}^5$. We define:
			\begin{align*}
				\Lambda &= f \partial_{x_1} \wedge \partial_{x_2} + g \partial_{x_3} \wedge \partial_{x_4},\quad \theta = dx_5,\\
				\varphi &= \partial_{x_1} g^{-1} dx_1 \wedge dx_3 \wedge dx_4 + \partial_{x_2} g^{-1} dx_2 \wedge dx_3 \wedge dx_4
				\\ &\quad+ (\partial_{x_5} g^{-1} - g^{-1}) dx_3 \wedge dx_4 \wedge dx_5
				\quad + \partial_{x_3} f^{-1} dx_1 \wedge dx_2 \wedge dx_3 \\ &\quad+ \partial_{x_4} f^{-1} dx_1 \wedge dx_2 \wedge dx_4
				+ (\partial_{x_5} f^{-1} - f^{-1}) dx_1 \wedge dx_2 \wedge dx_5,
			\end{align*}
			where $f$ and $g$ are nowhere vanishing functions on $\mathbb{R}^5$. The quadruplet $(\mathbb{R}^5,\Lambda,\varphi,\theta)$ is a  $\theta$-almost twisted Poisson manifold. After a rather tedious calculation, we obtain
			\begin{equation*}
				H_{\theta\text{-atp}}^p(\mathbb{R}^5) =\begin{cases}
					\begin{array}{lcl}
						C_{x^5}&\text{if} &p=0\\
						E_{\partial_{x_5}}&\text{if}& p=1\\
						0& \text{if}& p \geq 2
					\end{array}
				\end{cases}
			\end{equation*}
			with
			$$C_{x^5}=\{f\in C^\infty(\mathbb{R}^5), \;f(x_1,x_2,x_3,x_4,x_5)=f(x_5)\},\;  E_{\partial_{x_5}}= \{f \partial_{x_5},\; f \in C_{x^5}\}.$$
\end{example}

The $\theta$-almost twisted Poisson cohomology we have defined provides a natural framework for geometric quantization. Indeed, we will show that prequantization involves a cohomology class in $H_{\theta\text{-atp}}^2(M)$, generalizing the Kostant-Souriau condition. Furthermore, the Lie-Rinehart structure on 1-forms allows us to define an adapted contravariant derivative, which is an essential step in constructing the quantum Hilbert space.

\section{Geometric quantization}\label{sec3}

Geometric quantization, pioneered by Kostant \cite{R22} and Souriau, provides a powerful framework for constructing quantum systems from classical phase spaces modeled by symplectic manifolds. This procedure, encompassing prequantization and polarization, assigns Hilbert spaces to symplectic manifolds and Hermitian operators to classical observables. It was later extended to Poisson manifolds by Vaisman \cite{R19}, and subsequently to twisted Poisson manifolds by Petalidou \cite{R2}. In this section, we adapt this framework to $\theta$-almost twisted Poisson manifolds.

	\subsection{$\theta$-almost twisted Poisson-Chern class of a  complex line bundle over  $\theta$-almost twisted Poisson manifold}
	Let $(M,\Lambda,\varphi,\theta)$  be a $\theta$-almost twisted Poisson manifold,  $\xi: K\to M$  a complex line bundle over $M$, $\Gamma (K)$ the space of global cross sections of  $\xi: K\to M$, and $\operatorname{End}_\mathbb{C}(\Gamma (K))$ the space of complex  linear endomorphisms of $ \Gamma (K)$.
	\begin{definition}\label{Def1}
		A contravariant derivative  on  $\xi: K\to M$   is a $\mathbb{R}$-linear mapping  $D: \Omega^1(M)\to \operatorname{End}_\mathbb{C}(\Gamma (K))$   satisfying the  following  conditions:
		\begin{align}\label{Eq4}
		D_{(f\alpha)}=fD_\alpha, \text{ and }\ D_\alpha(fs)=fD_\alpha(s)+ \Lambda^\#(\alpha)(f)s
		\end{align}
		for every $\alpha\in \Omega^1(M)$,$f\in C^\infty(M)$,   and $s\in \Gamma (K)$.
	\end{definition}
	Let  $(M,\Lambda,\varphi,\theta)$  be a $\theta$-almost twisted Poisson manifold, and $h$  a Hermitian metric on a complex line bundle  $\xi: K\to M$. We say that a  contravariant derivative $D$ on  $\xi: K\to M$ is compatible with $h$ if
	\begin{align}\label{Eq6}
	\Lambda^\#(\alpha)(h(s_1,s_2))=h(D_\alpha s_1,s_2)+h(s_1,D_\alpha s_2)
	\end{align}
	for every $\alpha\in  \Omega^1(M)$,  and  $s_1,s_2\in \Gamma (K)$.
	\begin{definition}
		Let $\xi: K\to M$ be   a complex line bundle over a $\theta$-almost twisted Poisson manifold $(M,\Lambda,\varphi,\theta)$, and $D$ a
		contravariant derivative on $\xi: K\to M$.  The curvature of $D$ is the mapping  $C_D: \Omega^1(M)\times \Omega^1(M)\longrightarrow \operatorname{End}_\mathbb{C}(\Gamma (K)), (\alpha,\beta)\mapsto C_D(\alpha,\beta)$  where
		\begin{align}\label{Eq7}
		C_D(\alpha,\beta)=D_\alpha\circ D_\beta-D_\beta\circ D_\alpha-D_{[\alpha,\beta]_{\varphi,\theta}}.
		\end{align}
	\end{definition}
	From definition \ref{Def1}, and relation \eqref{Eq7}, we deduce that the curvature  $C_D$ of $D$ is bilinear over $C^\infty(M)$, and skew-symmetric. Thus, from the fact that  $\xi: K\to M$ is a complex line bundle  over M,  there exists a globally defined complex bivector  field $P_{C_D}= P_{{C_D}_1}+iP_{{C_D}_2}$ on $M$, with $P_{{C_D}_1},P_{{C_D}_2}\in \mathfrak{X}^2(M)$ such that
	\begin{align}\label{Eq8}
	C_D(\alpha,\beta)(s)= P_{C_D}(\alpha,\beta)s
	\end{align}
	for every $\alpha,\beta\in \Omega^1(M)$, and $s\in \Gamma (K)$.

	The cohomology operator $\partial_{\varphi,\theta}$ defined in \eqref{expi1} can be extended by linearity  to the module $\mathfrak{X}_{\mathbb{C}}(M)$ of complex multivectors on $M$.  This extension is explicitly defined by setting  for any
	$P=P_1+iP_2\in  \mathfrak{X}_{\mathbb{C}}(M)$
	$$\partial_{\varphi,\theta}P=\partial_{\varphi,\theta}P_1+i\partial_{\varphi,\theta}P_2$$  where  $P_1,P_2\in \mathfrak{X}(M)$.
	We show that  $\partial_{\varphi,\theta}^2=0$.  Therefore, we obtain the corresponding cohomology which will be called the \textit{complex $\theta$-almost twisted Poisson cohomology}, and will be   denoted by  $H_{\mathbb{C}^{\theta-atP}}^{*}(M)$.  Moreover, we have the following.
	
	\begin{theorem}\label{theo3.1}
		Let $\xi: K\to M$ be   a complex line bundle over a $\theta$-almost twisted Poisson manifold $M$,  $D$  a contravariant derivative on  $\xi: K\to M$,  $C_D$ the curvature of $D$, and $P_{C_D}$ the complex bivector field associated to $C_D$ \eqref{Eq8}. Then:
		\begin{enumerate}
			\item[(i)]  $P_{C_D}$ define a cohomology class  $[P_{C_D}]_{\varphi,\theta}$ in $H_{\mathbb{C}^{\theta-atP}}^{2}(M)$;
			\item[(ii)] $[P_{C_D}]_{\varphi,\theta}$ does not depend of the contravariant derivative $D$;
			\item[(ii)] If $D$ is compatible with a Hermitian metric $h$ on  $\xi: K\to M$, then  $P_{C_D}$ is purely imaginary.
		\end{enumerate}
	\end{theorem}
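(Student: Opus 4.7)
My plan is to prove the three parts in order, with part (i) constituting the main Bianchi-type identity, part (ii) a straightforward ``difference of two derivatives is a 1-cochain'' computation, and part (iii) a direct consequence of metric compatibility.

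For part (i), I would evaluate $\partial_{\varphi,\theta}(P_{C_D})(\alpha_1,\alpha_2,\alpha_3)$ on an arbitrary section $s\in\Gamma(K)$ using formula \eqref{expi1} with $n=2$. The crucial observation is that since $P_{C_D}(\alpha,\beta)$ is a complex function and $C_D(\alpha,\beta)s=P_{C_D}(\alpha,\beta)s$, the Leibniz rule \eqref{Eq4} gives $\Lambda^{\#}(\alpha_1)(P_{C_D}(\alpha_2,\alpha_3))\,s=[D_{\alpha_1},C_D(\alpha_2,\alpha_3)](s)$. Rewriting the three derivative terms as cyclic commutators, the identity to prove reduces to
\[
\sum_{\mathrm{cyc}}[D_{\alpha_1},C_D(\alpha_2,\alpha_3)]-\sum_{\mathrm{cyc}}C_D([\alpha_2,\alpha_3]_{\varphi,\theta},\alpha_1)=0.
\]
Expanding $C_D=[D_{\cdot},D_{\cdot}]-D_{[\cdot,\cdot]_{\varphi,\theta}}$, the pure bracket terms vanish by the Jacobi identity for commutators of endomorphisms, and the remaining $D$-terms collapse thanks to the Jacobi identity of the Lie--Rinehart bracket $[\,,\,]_{\varphi,\theta}$ (formula \eqref{g2}), which is guaranteed by the $\theta$-almost twisted Poisson hypothesis. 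This Jacobi identity for $[\,,\,]_{\varphi,\theta}$ is the main technical ingredient; I will simply invoke it from \cite{R21}.

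For part (ii), given another contravariant derivative $D'$, I would set $A_\alpha=D'_\alpha-D_\alpha$ and check, from \eqref{Eq4}, that $A_\alpha$ is $C^\infty(M)$-linear in both $\alpha$ and the section, hence acts as multiplication by a complex function $V(\alpha)$ that is $C^\infty(M)$-linear in $\alpha$. This defines a complex vector field $V\in\mathfrak{X}^1_{\mathbb{C}}(M)$. A direct computation of $C_{D'}-C_D$ (the commutator $[A_\alpha,A_\beta]$ vanishes because both are scalar multiplications) yields
\[
P_{C_{D'}}(\alpha,\beta)-P_{C_D}(\alpha,\beta)=\alpha^{\#}(V(\beta))-\beta^{\#}(V(\alpha))-V([\alpha,\beta]_{\varphi,\theta}),
\]
which is exactly $\partial_{\varphi,\theta}^{1}(V)(\alpha,\beta)$. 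Hence $[P_{C_{D'}}]_{\varphi,\theta}=[P_{C_D}]_{\varphi,\theta}$.

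For part (iii), I would apply the compatibility condition \eqref{Eq6} twice, once with $\alpha$ and once with $\beta$, then take the difference $\Lambda^{\#}(\alpha)\Lambda^{\#}(\beta)h(s_1,s_2)-\Lambda^{\#}(\beta)\Lambda^{\#}(\alpha)h(s_1,s_2)$. Using the anchor identity $\Lambda^{\#}([\alpha,\beta]_{\varphi,\theta})=[\Lambda^{\#}(\alpha),\Lambda^{\#}(\beta)]$ (again from the Lie algebroid structure of $\Omega^1(M)$), this simplifies to $h(C_D(\alpha,\beta)s_1,s_2)+h(s_1,C_D(\alpha,\beta)s_2)=0$. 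Substituting $C_D(\alpha,\beta)s=P_{C_D}(\alpha,\beta)s$ and using the Hermitian conventions ($h$ conjugate-linear in the second slot) gives $(P_{C_D}(\alpha,\beta)+\overline{P_{C_D}(\alpha,\beta)})\,h(s_1,s_2)=0$, so $P_{C_D}$ is purely imaginary. The only non-routine step is part (i); once the Lie--Rinehart Jacobi identity is in hand, the Bianchi identity reduces to a bookkeeping argument.
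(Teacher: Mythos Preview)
Your proof is correct, but it follows a different path from the paper's, especially for part (i). The paper works locally throughout: it picks a nowhere-vanishing local section $s$, writes $D_\alpha s = X(\alpha)\,s$ for a complex local vector field $X$, and then computes directly that $P_{C_D} = \partial_{\varphi,\theta} X$; closedness is then immediate from $\partial_{\varphi,\theta}^2 = 0$. This same local formula is recycled for (ii) (the difference $\widetilde{X}-X$ is your global $V$) and for (iii) (choosing $s$ orthonormal forces $X(\alpha)+\overline{X(\alpha)}=0$). By contrast, you prove the Bianchi identity $\partial_{\varphi,\theta}P_{C_D}=0$ directly and frame-free, reducing it to the operator Jacobi identity for $D$ together with the Jacobi identity of $[\,,\,]_{\varphi,\theta}$; your arguments for (ii) and (iii) are likewise global. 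The paper's route is shorter and makes explicit that $P_{C_D}$ is even a \emph{local} coboundary (which is what really drives all three parts), at the cost of invoking a local trivialization; your route is more intrinsic and its part (i) would carry over verbatim to higher-rank bundles, where the paper's ``$P_{C_D}=\partial_{\varphi,\theta}X$'' trick is unavailable.
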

	
	\begin{proof}
		
		\noindent(i) Let \(s\) be a local nowhere vanishing section.  The bundle  $\xi: K\to M$  being complex 1-dimensional, for every 1-form  $\alpha$ on $M$,   $\frac{D_\alpha s}{s}$ is a complex number.  From \eqref{Eq4}, the function $\alpha\mapsto  \frac{D_\alpha s}{s}$ is $\mathbb{R}$-linear, and $C^\infty(M)$-linear. Then, there exists a unique complex local vector field $X = X_1 + iX_2$  with $X_1,X_2\in\mathfrak{X}^1(M)$  such that
		\begin{align}\label{Eq9}
		D_\alpha s&= X(\alpha) s, \quad \forall \alpha \in \Omega^1(M).
		\end{align}
		
		By using \eqref{Eq7},\  \eqref{Eq9}, and \eqref{expi1}, we get:
		\[
		\begin{aligned}
		\CD(\alpha,\beta)(s) &= (D_\alpha D_\beta - D_\beta D_\alpha - D_{[\alpha,\beta]_{\varphi,\theta}})(s) \\
		&= D_\alpha(X(\beta) s) - D_\beta(X(\alpha) s) - X[\alpha,\beta]_{\varphi,\theta}s \\
		&= X(\beta) D_\alpha s + \Lambda^\#(\alpha)(X(\beta)) s - X(\alpha) D_\beta s \\
		&\quad - \Lambda^\#(\beta)(X(\alpha)) s - X([\alpha,\beta]_{\varphi,\theta}) s \\
		&= \left[ \Lambda^\#(\alpha)(X(\beta)) - \Lambda^\#(\beta)(X(\alpha)) - X[\alpha,\beta]_{\varphi,\theta} \right] s\\
		&=\partial_{\varphi,\theta} X(\alpha,\beta)s.
		\end{aligned}
		\]
		From \eqref{Eq8}, we have
		\begin{align}\label{Eq10}
		\PCD &= \partial_{\varphi,\theta} X.
		\end{align}
		Therefore, $\partial_{\varphi,\theta} \PCD = \partial_{\varphi,\theta}^2(X)=0$.  Hence, $\PCD$ defines a cohomology class  \([\PCD]_{\varphi,\theta} \in H_{\mathbb{C}^{\theta-atP}}^{*}(M)\).
		
		\noindent(ii) Let \(\widetilde{D}\) be another contravariant derivative on $\xi: K\to M$  with associated  vector field \(\widetilde{X}\). That is, $P_{C_{\widetilde{D}}}= \partial_{\varphi,\theta} \widetilde{X}$. We  Define \(\widehat{X} = \widetilde{X} - X\). Then,
		$P_{C_{\widetilde{D}}}= \partial_{\varphi,\theta} \widetilde{X} = \partial_{\varphi,\theta} (X + \widehat{X}) = \PCD + \partial_{\varphi,\theta} \widehat{X}$.
		Thus, $P_{C_{\widetilde{D}}}$ and $\PCD$ differ by a coboundary, so $[ P_{C_{\widetilde{D} }}]_{\varphi,\theta} = [\PCD]_{\varphi,\theta}$.\\
		
		\noindent(iii) Let \(e\) be a local orthonormal basis of $\Gamma (K)$. The  Compatibility of $D$ with a Hermitian metric
		$h$ implies that
		\begin{align*}
		0 &= \Lambda^\#(\alpha)(h(e,e))\\
		&= h(D_\alpha e, e) + h(e, D_\alpha e)\\
		& = h(X(\alpha)e,e) + h(e,X(\alpha)e)\\
		&=X(\alpha)+ \overline{X(\alpha)}.
		\end{align*}
		
		Thus, \(X\) is purely imaginary.  Therefore, \(\PCD = \partial_{\varphi,\theta} X\) is purely imaginary.
	\end{proof}
	We get the following definition.
	\begin{definition}
		Let $\xi: K\to M$ be   a complex line bundle over a $\theta$-almost twisted Poisson manifold $(M,\Lambda,\varphi,\theta)$. Suppose that  $D$  is  a contravariant derivative with curvature $C_D$ such that $P_{C_D}$ is purely imaginary. Then the cohomology class   $[\frac{i}{2\pi}P_{C_D}]_{\varphi,\theta}$ in $H_{\theta-atP}^{2}(M)$ will be called  \textit{the first real $\theta$-almost twisted Poisson-Chern class} of  $\xi: K\to M$.
	\end{definition}
	
	We recall that for   a complex line bundle  $\xi: K\to M$ over a   differential manifold $M$, and  $\nabla$  a Hermitian connection  on  $\xi: K\to M$, there exists a purely imaginary closed 2-form $\Omega_\nabla$ such that
	\begin{align}
	C_\nabla(X,Y)(s)&=\Omega_\nabla(X,Y).s
	\end{align}
	for all $X,Y\in \mathfrak{X}^1(M)$, and $s\in \Gamma (K)$  where $C_\nabla$ is the curvature of the connection $\nabla$. \\ The real Chern class  $c_1(K,\mathbb{R})\in H_{dR}^2(M,R)$ is the \textit{integral} cohomology class $[\frac{i}{2\pi}\Omega_\nabla]$, see \cite{R22}.
	Note that the canonical
	injection $i : \mathbb{Z}\to\mathbb{R}$ induces a homomorphism $i: H_{dR}^2(M,\mathbb{Z})\longrightarrow H_{dR}^2(M,\mathbb{R})$. A class $[\alpha]\in H_{dR}^2(M,\mathbb{R})$  is called \textit{integral} if  it lies in the image  of $i$, see \cite{R22}.
	
	The following result show the relation between the first real Chern class and  the  first real $\theta$-almost twisted Poisson-Chern class of a complex line bundle over a $\theta$-almost twisted Poisson manifold.
	\begin{theorem}\label{theo3.2}
		Let $\xi: K\to M$ be   a complex line bundle over a $\theta$-almost twisted Poisson manifold $(M,\Lambda,\varphi,\theta)$, $\nabla$ a Hermitian connection  on $\xi: K\to M$, and $D$ the associated Hermitian contravariant derivative to $\nabla$ on $\xi: K\to M$, that is for any $\alpha\in \Omega^1(M)$, $D_\alpha=\nabla_{\Lambda^\#(\alpha)}$.   If $c_1(K,\mathbb{R})$, and $[\frac{i}{2\pi}P_{C_D}]_{\varphi,\theta}$ are respectively the first real Chern class and the first real $\theta$-almost twisted Poisson-Chern class of $\xi: K\to M$, then
		\begin{align}
		\Lambda^\#(c_1(K,\mathbb{R}))&=\left[\frac{i}{2\pi}P_{C_D}\right]_{\varphi,\theta}
		\end{align}
		where $ \Lambda^\#: H_{dR}^{*}(M,\mathbb{R}) \longrightarrow  H_{\theta-atP}^{*}(M)$ is the homomorphism  defined in \eqref{ex1}.
	\end{theorem}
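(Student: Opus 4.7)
The plan is to reduce the identity to the single point equality $P_{C_D} = \Lambda^\#(\Omega_\nabla)$ at the level of complex bivector fields, after which the statement will follow by naturality of the map $\Lambda^\#: H_{dR}^{*}(M,\mathbb{R}) \to H_{\theta\text{-}atP}^{*}(M)$ introduced in \eqref{ex1}. Since $c_1(K,\mathbb{R}) = [\tfrac{i}{2\pi}\Omega_\nabla]_{dR}$ by definition, applying the cohomological map $\Lambda^\#$ delivers $\Lambda^\#(c_1(K,\mathbb{R})) = [\tfrac{i}{2\pi}\Lambda^\#(\Omega_\nabla)]_{\varphi,\theta}$, so everything hinges on identifying $\Lambda^\#(\Omega_\nabla)$ with $P_{C_D}$.

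To establish the point equality, I would first unfold the definitions. Fix $\alpha,\beta\in\Omega^1(M)$ and $s\in\Gamma(K)$. Using $D_\alpha=\nabla_{\Lambda^\#(\alpha)}$ and \eqref{Eq7}, compute
\[
C_D(\alpha,\beta)(s) = \nabla_{\Lambda^\#(\alpha)}\nabla_{\Lambda^\#(\beta)}s - \nabla_{\Lambda^\#(\beta)}\nabla_{\Lambda^\#(\alpha)}s - \nabla_{\Lambda^\#([\alpha,\beta]_{\varphi,\theta})}s .
\]
Comparing with the usual formula $C_\nabla(X,Y) = \nabla_X\nabla_Y - \nabla_Y\nabla_X - \nabla_{[X,Y]}$, the desired identification $C_D(\alpha,\beta) = C_\nabla(\Lambda^\#(\alpha),\Lambda^\#(\beta))$ is equivalent to the anchor morphism property
\[
\Lambda^\#\bigl([\alpha,\beta]_{\varphi,\theta}\bigr) = \bigl[\Lambda^\#(\alpha),\Lambda^\#(\beta)\bigr],
\]
which I can invoke from the Lie-Rinehart (Lie algebroid) structure on $\Omega^1(M)$ already recalled in the excerpt via the bracket \eqref{g2}: being a Lie algebroid anchor, $\Lambda^\#$ is a Lie algebra homomorphism. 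Once this is granted, $C_D(\alpha,\beta)(s) = \Omega_\nabla(\Lambda^\#(\alpha),\Lambda^\#(\beta))\, s$, and the definition \eqref{Eq3} of $\Lambda^\#$ on forms (with $k=2$, so $(-1)^k = 1$) rewrites the right-hand side as $\Lambda^\#(\Omega_\nabla)(\alpha,\beta)\, s$. Uniqueness of $P_{C_D}$ in \eqref{Eq8} then yields $P_{C_D} = \Lambda^\#(\Omega_\nabla)$.

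The main obstacle is the anchor morphism property $\Lambda^\#([\alpha,\beta]_{\varphi,\theta}) = [\Lambda^\#(\alpha),\Lambda^\#(\beta)]$; if the Lie algebroid statement from \cite{R21} is taken as a black box it is immediate, but if one wishes a self-contained argument, one must verify it by direct computation with \eqref{g2}, which uses $\Lambda^\#(\theta)=0$, the Jacobi-type identity $\tfrac12[\Lambda,\Lambda]=\Lambda^\#(\varphi)$ and the compatibility $d\varphi=\theta\wedge\varphi$. Putting everything together, $\Lambda^\#(c_1(K,\mathbb{R})) = [\tfrac{i}{2\pi}\Lambda^\#(\Omega_\nabla)]_{\varphi,\theta} = [\tfrac{i}{2\pi}P_{C_D}]_{\varphi,\theta}$, as required.
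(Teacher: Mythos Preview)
Your argument is correct, but it reaches the key identity $P_{C_D}=\Lambda^\#(\Omega_\nabla)$ by a different route than the paper. The paper works locally: it picks a connection $1$-form $\omega$ with $\Omega_\nabla=d\omega$, reads off the local vector field $X=-\Lambda^\#(\omega)$ representing $D$ (so that $D_\alpha s=X(\alpha)s$), invokes the formula $P_{C_D}=\partial_{\varphi,\theta}X$ from the proof of Theorem~\ref{theo3.1}, and then applies the chain-map relation \eqref{Eq11}, $\partial_{\varphi,\theta}\circ\Lambda^\#=-\Lambda^\#\circ d$, to conclude $P_{C_D}=\Lambda^\#(d\omega)=\Lambda^\#(\Omega_\nabla)$. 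You instead compare curvatures directly and globally, reducing everything to the anchor identity $\Lambda^\#([\alpha,\beta]_{\varphi,\theta})=[\Lambda^\#(\alpha),\Lambda^\#(\beta)]$, which is a consequence of the Lie algebroid structure cited from \cite{R21} (and is in fact used later in the paper, in the converse part of Theorem~\ref{g7}). Your approach is cleaner and coordinate-free, and it makes transparent that the result is really the functoriality of curvature under the Lie algebroid morphism $\Lambda^\#$; the paper's approach, on the other hand, stays entirely within the cohomological machinery (\eqref{Eq10}, \eqref{Eq11}) already developed and avoids appealing to the anchor-bracket compatibility as a separate input.
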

	\begin{proof}
		Let \(\Omega_\nabla\) be  a   purely imaginary, and  closed 2-form associated to the connection $\nabla$. That is,
		\[
		C_\nabla(X,Y)(s) = \Omega_\nabla(X,Y)s, \quad \forall X,Y \in \mathfrak{X}^1(M),\;  s \in \Gamma(K).
		\]
		So  \(c_1(K, \mathbb{R}) = \left[\frac{i}{2\pi} \Omega_\nabla\right] \in H^2_{\dR}(M,\mathbb{R})\). For a local section \(s\) of $\xi: K\to M$,  we have
		$\nabla_Y s = \omega(Y) s$  where $\omega$ is the 1-form such that   $\Omega_\nabla = d\omega$, see ~\cite{R23}.
		Then,
		\begin{align*}
		D_\alpha s = \nabla_{\Lambda^\#(\alpha)} s= \omega(\Lambda^\#(\alpha)) s = -\Lambda^\#(\omega)(\alpha)s.
		\end{align*}
		Thus, the associated  vector field for \(D\) is \(X = -\Lambda^\#(\omega)\).
		
		From \eqref{Eq10},
		$\PCD = \partial_{\varphi,\theta} X = -\partial_{\varphi,\theta} (\Lambda^\#(\omega))$. And from \eqref{Eq11},
		$\PCD = -(-\Lambda^\#(d\omega)) = \Lambda^\#(\Omega_\nabla)$. Therefore:
		\begin{align*}
		\left[\frac{i}{2\pi} \PCD\right]_{\varphi,\theta} = \left[\frac{i}{2\pi} \Lambda^\#(\Omega_\nabla)\right]_{\varphi,\theta}= \Lambda^\#\left(\left[\frac{i}{2\pi} \Omega_\nabla\right]\right)= \Lambda^\#(c_1(K, \mathbb{R})).
		\end{align*}
	\end{proof}
	
	\subsection{Prequantization }
The prequantization step in geometric quantization aims to construct a faithful representation of the classical algebra of observables as operators acting on sections of a complex line bundle. Specifically, for a $\theta$-almost twisted Poisson manifold $(M,\Lambda,\varphi,\theta)$, we seek to associate with each smooth function $f\in C^\infty(M)$ a Hermitian operator $\hat{f}$ on the space of cross-sections $\Gamma(K)$ of a Hermitian line bundle $\xi: K\to M$ such that the map $f\mapsto \hat{f}$ satisfies $\widehat{\{f,g\}}=[\hat{f},\hat{g}] $, where the commutator $[\ ,\ ]$  is defined on  $\operatorname{End}_\mathbb{C}(\Gamma(K))$. This requires the construction of a line bundle $\xi: K\to M$ equipped with a suitable contravariant derivative $D$.

Let $(M,\Lambda,\varphi,\theta)$ be a $\theta$-almost twisted Poisson manifold, and let $D$ be a contravariant derivative on a complex line bundle $\xi: K\to M$. Let $\hat{f}$ be the representation of $f\in C^\infty(M)$ in $\operatorname{End}_\mathbb{C}(\Gamma(K))$ defined by
	\begin{align}\label{g1}
	\hat{f}(s)&=D_{df}s+2\pi ifs,\ \ s\in \Gamma(K).
	\end{align}
	Let $A$ be  the subset of $\operatorname{End}_\mathbb{C}(\Gamma(K))$ defined by $A=\{\ \hat{f}\in \operatorname{End}_\mathbb{C}(\Gamma(K))\ \slash \ f\in C^\infty(M)\ \}$.  We define on $A$ the following bracket
	\begin{align}
	\{\hat{f},\hat{g}\}_{\varphi,\theta}&=[\hat{f},\hat{g}]-D_{i_{\Lambda^{\#}(\beta)}i_{\Lambda^{\#}(\alpha)}\varphi}-\Lambda(df,dg)D_\theta
	\end{align}
	for every  $f,g\in C^\infty(M)$ where $[\hat{f},\hat{g}]=\hat{f}\circ \hat{g}-\hat{g}\circ\hat{f}$.
	We have de following Proposition.
	
	\begin{proposition}
		The hat representation  \ \  $\widehat{} \ \ : (C^\infty(M), \{\ ,\ \})\longrightarrow \operatorname{End}_\mathbb{C}(\Gamma(K)),\ f\mapsto \hat{f}$ is a homomorphism, that is
		\begin{align}\label{g4}
		\widehat{\{f,g\}}&=\{\hat{f},\hat{g}\}_{\varphi,\theta},\ \  f,g\in C^\infty(M).
		\end{align}
		This means
		\begin{align}\label{sl1}
		C_D(df,dg)&=-2\pi i\{f,g\}.
		\end{align}
	\end{proposition}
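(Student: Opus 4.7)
The strategy is a direct computation: expand both sides of \eqref{g4} acting on an arbitrary $s\in\Gamma(K)$, and show that the equality is equivalent to the curvature identity \eqref{sl1}. The correction terms appearing in $\{\hat{f},\hat{g}\}_{\varphi,\theta}$ are precisely designed to absorb the non-Koszul parts of $[df,dg]_{\varphi,\theta}$, so most of the calculation consists of pattern matching.

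First I would unwind $\hat{f}\circ \hat{g}(s)$ using \eqref{g1} together with the Leibniz property \eqref{Eq4}:
\begin{align*}
\hat{f}(\hat{g}(s)) &= D_{df}\bigl(D_{dg}s+2\pi i g\,s\bigr) + 2\pi i f\bigl(D_{dg}s+2\pi i g\,s\bigr)\\
&= D_{df}D_{dg}s + 2\pi i\,g\,D_{df}s + 2\pi i\,\Lambda^\#(df)(g)\,s + 2\pi i\,f\,D_{dg}s - 4\pi^{2}fg\,s.
\end{align*}
Subtracting the symmetric expression for $\hat{g}(\hat{f}(s))$, the terms $2\pi i g D_{df}s$, $2\pi i f D_{dg}s$, and $-4\pi^2 fg s$ cancel, and using $\Lambda^\#(df)(g)=\Lambda(df,dg)=\{f,g\}$ and antisymmetry I obtain
\[
[\hat{f},\hat{g}](s) = \bigl(D_{df}D_{dg}-D_{dg}D_{df}\bigr)s + 4\pi i\{f,g\}\,s.
\]

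Next I would invoke the definition \eqref{Eq7} of the curvature to rewrite $D_{df}D_{dg}-D_{dg}D_{df}=C_D(df,dg)+D_{[df,dg]_{\varphi,\theta}}$, and then compute $[df,dg]_{\varphi,\theta}$ via \eqref{g2}. For the Koszul piece, a quick check using $\mathcal{L}_{\Lambda^\#(df)}(dg)=d\{f,g\}$, $\mathcal{L}_{\Lambda^\#(dg)}(df)=-d\{f,g\}$, and $d\Lambda(df,dg)=d\{f,g\}$ yields $[df,dg]_K=d\{f,g\}$. Consequently
\[
[df,dg]_{\varphi,\theta} = d\{f,g\} + i_{\Lambda^\#(dg)}i_{\Lambda^\#(df)}\varphi + \{f,g\}\,\theta,
\]
and by $C^\infty(M)$-linearity of $D$ in the first variable (cf.\ \eqref{Eq4}),
\[
D_{[df,dg]_{\varphi,\theta}} = D_{d\{f,g\}} + D_{i_{\Lambda^\#(dg)}i_{\Lambda^\#(df)}\varphi} + \{f,g\}\,D_{\theta}.
\]

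Substituting these into the definition of $\{\hat{f},\hat{g}\}_{\varphi,\theta}$, the two correction terms cancel exactly with the extra summands coming from the $\theta$-almost twisted bracket, and I am left with
\[
\{\hat{f},\hat{g}\}_{\varphi,\theta}(s) = C_D(df,dg)s + D_{d\{f,g\}}s + 4\pi i\{f,g\}\,s.
\]
Comparing this with $\widehat{\{f,g\}}(s)=D_{d\{f,g\}}s+2\pi i\{f,g\}\,s$, equality is equivalent to $C_D(df,dg)s=-2\pi i\{f,g\}\,s$ for every $s\in\Gamma(K)$, which is \eqref{sl1}. The only delicate point, and the step where I expect bookkeeping errors to be most likely, is the computation of $[df,dg]_K=d\{f,g\}$ and the careful matching of signs and of the indices in $i_{\Lambda^\#(dg)}i_{\Lambda^\#(df)}\varphi$ with the correction term of $\{\hat{f},\hat{g}\}_{\varphi,\theta}$; once that alignment is confirmed the rest is formal.
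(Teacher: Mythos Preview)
Your proposal is correct and follows essentially the same direct-computation route as the paper: both first obtain $[\hat f,\hat g]=D_{df}D_{dg}-D_{dg}D_{df}+4\pi i\{f,g\}$, then use the curvature definition \eqref{Eq7} together with the identity $[df,dg]_{\varphi,\theta}=d\{f,g\}+i_{\Lambda^\#(dg)}i_{\Lambda^\#(df)}\varphi+\{f,g\}\theta$ to reduce the equality $\widehat{\{f,g\}}=\{\hat f,\hat g\}_{\varphi,\theta}$ to \eqref{sl1}. The only cosmetic difference is the direction in which the computation is organized (you expand $\{\hat f,\hat g\}_{\varphi,\theta}$ and compare, while the paper expands $\widehat{\{f,g\}}$ and compares), and your explicit verification $[df,dg]_K=d\{f,g\}$ is a small bonus over the paper, which leaves that step implicit.
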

	
	\begin{proof}
		From equality \eqref{g1}, and Definition~\ref{Def1}, we get
		\begin{align}\label{g3}
		\nonumber
		[\hat{f},\hat{g}]&=\hat{f}\circ\hat{g}-\hat{g}\circ\hat{f}\\
		&= D_{df}\circ D_{dg}-D_{dg}\circ D_{df}+ 4\pi i\{f,g\}.
		\end{align}
		By using, \eqref{g1},~\eqref{g2},~\eqref{Eq7}, and \eqref{g3}, we have:
		\begin{align*}
		\widehat{\{f,g\}}s&=D_{d\{f,g\}}s+2\pi i\{f,g\}s \\
		&=D_{[df,dg]_{\varphi,\theta}}s-D_{i_{\Lambda^{\#}(dg)}i_{\Lambda^{\#}(df)}\varphi}s-\Lambda(df,dg)D_\theta s+2\pi i\{f,g\}s\\
		&=D_{df}\circ D_{dg}s-D_{dg}\circ D_{df}s -C_D(df,dg)s-D_{i_{\Lambda^{\#}(dg)}i_{\Lambda^{\#}(df)}\varphi}s\\
		&\quad- \Lambda(df,dg)D_\theta s+2\pi i\{f,g\}s\\
		&=-C_D(df,dg)s+[\hat{f},\hat{g}]-4\pi i\{f,g\}s-D_{i_{\Lambda^{\#}(dg)}i_{\Lambda^{\#}(df)}\varphi}s\\
		&\quad- \Lambda(df,dg)D_\theta s+2\pi i\{f,g\}s\\
		&=\{\hat{f},\hat{g}\}_{\varphi,\theta}s-C_D(df,dg)s-2\pi i\{f,g\}s.
		\end{align*}
		And the proposition follows.  \end{proof}
	
	\begin{definition}\label{g5}
		We say that a  $\theta$-almost twisted Poisson manifold  $(M,\Lambda,\varphi,\theta)$ is prequantizable  if there exists a Hermitian complex line bundle $\xi: K\to M$, the prequantization bundle, such that the representation operator defined in  \eqref{g1} holds  on $\Gamma(K)$ and  satisfy  \eqref{g4}.
	\end{definition}
	
	According to Definition~\ref{g5}, the prequantization problem of a $\theta$-almost twisted Poisson manifold  $(M,\Lambda,\varphi,\theta)$ has a solution if and only if, there exists a  Hermitian complex line bundle $\xi: K\to M$ endowed with a contravariant derivative $D$ whose curvature $C_D$ satisfies \begin{align}\label{g6}
	C_D=-2\pi i\Lambda.
	\end{align}
	Therefore,  the curvature $C_D$  is  purely imaginary. This implies that  $D$ is  compatible with the Hermitian complex line bundle $\xi: K\to M$.
	\begin{theorem}\label{g7}
		A $\theta$-almost twisted Poisson manifold  $(M,\Lambda,\varphi,\theta)$ is prequantizable if and only if there exists a vector field $Z$ on $M$, and a closed 2-form $\eta$ on $M$  representing an integral cohomology  class of $M$ satisfying
		\begin{align}\label{g8}
		\Lambda+\partial_{\varphi,\theta}(Z)=\Lambda^{\#}(\eta).
		\end{align}
	\end{theorem}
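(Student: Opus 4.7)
The plan is to reduce the prequantization condition~\eqref{g6} to a cohomological statement: by Definition~\ref{g5} and~\eqref{g6}, prequantizability is equivalent to the existence of a Hermitian complex line bundle $(K,h)$ carrying an $h$-compatible contravariant derivative $D$ with $P_{C_D}=-2\pi i\Lambda$. Up to the factor $-2\pi i$, this says $[\Lambda]_{\varphi,\theta}=\Lambda^{\#}(c_1(K,\mathbb{R}))$, i.e.\ $[\Lambda]_{\varphi,\theta}$ lies in the image under $\Lambda^{\#}$ of an integral de~Rham class. The key inputs will be Weil's integrality theorem (an integral closed $2$-form is, up to the factor $-2\pi i$, the curvature of some Hermitian connection on some Hermitian line bundle), Theorem~\ref{theo3.2}, and the formula $P_{C_D}=\partial_{\varphi,\theta}X$ from the proof of Theorem~\ref{theo3.1}.

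For the $(\Leftarrow)$ direction, given $Z\in\mathfrak{X}^1(M)$ and a closed integral $2$-form $\eta$ with $\Lambda+\partial_{\varphi,\theta}(Z)=\Lambda^{\#}(\eta)$, I invoke Weil's theorem to get a Hermitian line bundle $(K,h)$ and a Hermitian connection $\nabla$ whose curvature $2$-form is $\Omega_\nabla=-2\pi i\,\eta$. Setting $D^\nabla_\alpha:=\nabla_{\Lambda^{\#}(\alpha)}$, Theorem~\ref{theo3.2} gives $P_{C_{D^\nabla}}=\Lambda^{\#}(\Omega_\nabla)=-2\pi i\,\Lambda^{\#}(\eta)$. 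I then shift this by $Z$ via
\[
D_\alpha s \;:=\; D^\nabla_\alpha s \;+\; 2\pi i\,\alpha(Z)\,s,
\]
and verify three routine points: $D$ satisfies the axioms~\eqref{Eq4}; $D$ is $h$-compatible, since the shift $2\pi i\,\alpha(Z)$ is purely imaginary and cancels in the sesquilinear sum $h(D_\alpha s_1,s_2)+h(s_1,D_\alpha s_2)$; and, using the local form $D_\alpha s=(X^{\nabla}+2\pi i Z)(\alpha)\,s$ together with~\eqref{Eq10},
\[
P_{C_D}=\partial_{\varphi,\theta}(X^{\nabla}+2\pi i Z)=-2\pi i\,\Lambda^{\#}(\eta)+2\pi i\,\partial_{\varphi,\theta}(Z)=-2\pi i\,\Lambda.
\]

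For the $(\Rightarrow)$ direction, suppose $(K,h,D)$ prequantizes $M$, so $P_{C_D}=-2\pi i\Lambda$. Equip $K$ with any Hermitian connection $\nabla$ (e.g.\ the Chern connection), and write $\Omega_\nabla=-2\pi i\,\eta$, so that $\eta$ is closed and represents the integral class $c_1(K,\mathbb{R})$. With $D^\nabla_\alpha:=\nabla_{\Lambda^{\#}(\alpha)}$, Theorem~\ref{theo3.2} yields $P_{C_{D^\nabla}}=-2\pi i\,\Lambda^{\#}(\eta)$. The difference $(D-D^\nabla)_\alpha$ is $C^{\infty}(M)$-linear in $s$ \emph{and} in $\alpha$ (the non-tensorial Leibniz terms~\eqref{Eq4} cancel since both have anchor $\Lambda^{\#}$), so is globally of the form $s\mapsto \alpha(X_0)\,s$ for a complex vector field $X_0$. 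Joint $h$-compatibility of $D$ and $D^\nabla$ forces $\alpha(X_0)+\overline{\alpha(X_0)}=0$, hence $X_0=2\pi i\,Z$ for some real $Z\in\mathfrak{X}^1(M)$. The curvature computation from Theorem~\ref{theo3.1} then gives $-2\pi i\,\Lambda=P_{C_D}=P_{C_{D^\nabla}}+\partial_{\varphi,\theta}(X_0)=-2\pi i\,\Lambda^{\#}(\eta)+2\pi i\,\partial_{\varphi,\theta}(Z)$, which rearranges to the required identity $\Lambda+\partial_{\varphi,\theta}(Z)=\Lambda^{\#}(\eta)$.

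The main technical obstacles, none of which should be severe, are: (a) checking that the shift $D^\nabla+2\pi i Z(\cdot)\,\mathrm{id}$ still defines a contravariant derivative in the sense of Definition~\ref{Def1} and remains compatible with $h$ under the sesquilinearity convention used for $h$; (b) justifying that the difference of two compatible contravariant derivatives on a line bundle is globally multiplication by $\alpha(X_0)$ for one globally defined vector field $X_0$, and that Hermitian compatibility forces $X_0$ to be purely imaginary so that $Z$ is genuinely a real vector field on $M$; and (c) appealing to Weil's theorem in the form needed here, producing simultaneously a line bundle, a Hermitian metric and a Hermitian connection whose curvature is prescribed up to the factor $-2\pi i$.
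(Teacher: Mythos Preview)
Your proposal is correct and follows essentially the same route as the paper: both directions hinge on Weil's integrality theorem, the induced contravariant derivative $D^\nabla_\alpha=\nabla_{\Lambda^{\#}(\alpha)}$, the identity $P_{C_{D^\nabla}}=\Lambda^{\#}(\Omega_\nabla)$ from Theorem~\ref{theo3.2}, and the shift $D_\alpha=D^\nabla_\alpha+2\pi i\,\alpha(Z)$. Your forward direction is in fact slightly more careful than the paper's, since you explicitly argue that the difference $D-D^\nabla$ is a \emph{global} tensor $\alpha\mapsto\alpha(X_0)$ and that joint $h$-compatibility forces $X_0$ to be purely imaginary, hence $Z$ real---a point the paper leaves implicit when it sets $Z=\tfrac{i}{2\pi}W$.
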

	\begin{proof}
		Suppose that the  $\theta$-almost twisted Poisson manifold  $(M,\Lambda,\varphi,\theta)$ is prequantizable.  Then, there exists a Hermitian complex line bundle $\xi: K\to M$ endowed with a contravariant derivative $D$ whose curvature $C_D$ satisfies $ C_D=-2\pi i\Lambda.$ Thus, $P_{C_D}=-2\pi i\Lambda$.

		Let \(\nabla\) be a Hermitian connection on $\xi: K\to M$ with curvature \(\Omega_\nabla\), and $\widetilde{D}_\alpha = \nabla_{{\Lambda^{\#}(\alpha)}}$. By Theorems~\ref{theo3.1}, we have
		$[P_{C_{\widetilde{D}}}]_{\varphi,\theta} = [\PCD]_{\varphi,\theta}$.
		So, there exists a vector field $W$  such that
		$P_{C_{\widetilde{D}}} = \PCD + \partial_{\varphi,\theta} W $.
		
		By Theorem~\ref{theo3.2} and Proposition~\ref{ex1}, we get
		$\Lambda^{\#}(\Omega_\nabla)=P_{C_{\widetilde{D}}}$.
		Therefore,
		\begin{align*}
		\Lambda^{\#}(\Omega_\nabla)=\PCD + \partial_{\varphi,\theta} W=-2\pi i\Lambda+ \partial_{\varphi,\theta} W.
		\end{align*}
		By setting $Z= \frac{i}{2\pi} W$ and  \(\eta = \frac{i}{2\pi} \Omega_\nabla\) which is real, closed, and integral, we obtain
		\begin{align*}
		\Lambda+\partial_{\varphi,\theta}(Z)=\Lambda^{\#}(\eta).
		\end{align*}

		Conversely,  let $Z$ be  a vector field and  \(\eta\) be  a closed  2-form  on $(M,\Lambda,\varphi,\theta)$ such that \eqref{g8}.  There exists Hermitian line bundle \(K\) with connection \(\nabla\) such that \(\Omega_\nabla = -2\pi i \eta\). We  define a contravariant derivative  for all $\alpha \in \Omega^1(M),s\in \Gamma(K)$ as follows:
		$$
		D_\alpha s = \nabla_{\Lambda^{\#}(\alpha)} s + 2\pi i \alpha(Z)s.
		$$
		Then,
		\[
		\begin{aligned}
		\CD(\alpha,\beta)(s) &=D_\alpha\circ D_\beta s-D_\beta\circ D_\alpha s-D_{{[\alpha,\beta]_{\varphi,\theta}}}s\\
		&=D_\alpha\Big(\nabla_{\Lambda^{\#}(\beta)} s + 2\pi i \beta(Z)s\Big)-D_\beta\Big( \nabla_{\Lambda^{\#}(\alpha)} s + 2\pi i \alpha(Z)s\Big)\\
		&\quad- \nabla_{\Lambda^{\#}([\alpha,\beta]_{\varphi,\beta})} s - 2\pi i[\alpha,\beta]_{\varphi,\beta}(Z)s\\
		&= \nabla_{\Lambda^{\#}(\alpha)}\Big( \nabla_{\Lambda^{\#}(\beta)} s + 2\pi i \beta(Z)s \Big)  + 2\pi i \alpha(Z)\Big(\nabla_{\Lambda^{\#}(\beta)} s + 2\pi i \beta(Z)s  \Big)\\
		&\quad- \nabla_{\Lambda^{\#}(\beta)}\Big( \nabla_{\Lambda^{\#}(\alpha)} s + 2\pi i \alpha(Z)s \Big)  - 2\pi i \beta(Z)\Big(\nabla_{\Lambda^{\#}(\alpha)} s + 2\pi i \alpha(Z)s  \Big)\\
		&\quad- \nabla_{[\Lambda^{\#}(\alpha),\Lambda^{\#}(\beta)]} s - 2\pi i[\alpha,\beta]_{\varphi,\beta}(Z)s\\
		&= \Omega_\nabla\big(\Lambda^{\#}(\alpha), \Lambda^{\#}(\beta)\big)s + 2\pi i  \Lambda^{\#}(\alpha)(\beta(Z))s\\
		&\quad - 2\pi i \Big[\Lambda^{\#}(\beta)(\alpha(Z))+[\alpha,\beta]_{\varphi,\beta}(Z)\Big]s\\
		&= -2\pi i \eta\big(\Lambda^{\#}(\alpha), \Lambda^{\#}(\beta)\big)s + 2\pi i \partial_{\varphi,\theta} Z(\alpha,\beta)s \\
		&= -2\pi i \Big[ \Lambda^{\#}(\eta)(\alpha,\beta) - \partial_{\varphi,\theta} Z(\alpha,\beta) \Big]s \\
		&= -2\pi i \Lambda(\alpha,\beta)s.
		\end{aligned}
		\]
Since the vector field $Z$ is real, the contravariant derivative $D$ is Hermitian. We conclude that $D$ satisfies the prequantization condition.
\end{proof}

The following remark highlights a variety of examples of prequantizable $\theta$-almost twisted Poisson manifolds.

\begin{remark}~ \label{nb3}
\begin{enumerate}
\item Let $(M,\Lambda,\varphi)$ be a twisted Poisson manifold. From  Theorem~\ref{g7}, it follows that $M$ is prequantizable as a $0$-almost twisted Poisson manifold if and only if there exists a vector field $Z$   and a  closed 2-form $\eta$  representing  an integral cohomology class of $M$  satisfying \eqref{g8}.  Conversely, a $0$-almost twisted Poisson manifold $(M,\Lambda,\varphi,O)$ is prequantizable if and only if the triplet $(M, \Lambda,\varphi)$ is prequantizable as a twisted Poisson manifold. Indeed,   $\partial_{\varphi,0}=\partial_{\varphi}$. 			 
\item Let $(M_0,\Lambda_0,\varphi_0)$ be a twisted Poisson manifold.  Consider the  $\theta$-almost twisted Poisson structure on $M=M_0\times\mathbb{R}$ defined by
$\Lambda=e^t\Lambda_0$, $\varphi=e^{-t}\varphi_0$,  and $\theta=-dt$  where $t$ is the  coordinate
\begin{enumerate}
\item Assume  that $\Lambda_0$  is nondegenerate. Then the  induced  $\theta$-almost twisted Poisson manifold  $(M,\Lambda,\varphi, \theta)$  is not  prequantizable. Let's prove that the prequantization equation \eqref{g8} has no solutions on M.
Note that for any vector field  $Z$ on $M_0$, there exists a  differential 1-form $\alpha$ such that $Z=\Lambda_0^{\#}(\alpha)$.  Let $\omega$  be a differential  2-form such that $\Lambda_0=\Lambda_0^{\#}(\omega)$.  Therefore,
\begin{align}
\Lambda+\partial_{\varphi,\theta}(Z)=\Lambda^{\#}(\eta)
\end{align}
where $\eta=e^{-t}\omega-d(e^{-t}\alpha)$. The differential 2-form $\eta$ cannot be identically null. Indeed, suppose it identically null. One deduces that $\omega=-dt\wedge\alpha+d\alpha$. This contradicts the fact that $\omega$ belongs to  $\Omega^2(M_0)$. Observe that  $\eta$ is  not closed. It follows that   $(M,\Lambda,\varphi, \theta)$  is not  prequantizable.
\item
Assume  that the    twisted Poisson manifold $(M_0,\Lambda_0,\varphi_0)$  is exact in the sense of Petalidou. Then   the  induced  $\theta$-almost twisted Poisson manifold  $(M,\Lambda,\varphi, \theta)$ is prequantizable. In fact, if  the    twisted Poisson manifold $(M_0,\Lambda_0,\varphi_0)$ is exact, then there exists a vector field $Z_0$ on $M_0$ such that $\Lambda_0= \partial_{\varphi_0}(Z_0)$. By identifying $M_0$ with the submanifold   $\{0\}\times M_0$ of $M$, we can identify  $Z_0$  as a  vector  field on $M$. Thus,  we obtain $\partial_{\varphi,\theta}Z_0=e^t\partial_{\varphi}Z_0$. Hence, by setting $Z=-Z_0$, and $\eta=0$ which represents the integral cohomology class $[0]\in H_{dR}^2(M,\mathbb{R})$, we have the following equality:
\begin{align*}
\Lambda+\partial_{\varphi,\theta}(Z)&=e^t(\Lambda_0+\partial_{\varphi_0}(Z))=0=\Lambda^{\#}(\eta).
\end{align*}
This justifies that $(M,\Lambda,\varphi, \theta)$ is prequantizable.
\end{enumerate}
\item An exact   $\theta$-almost twisted Poisson manifold   is a $\theta$-almost twisted Poisson manifold  $(M,\Lambda,\varphi, \theta)$ such that there exists a vector field $X$ on $M$ satisfying $\Lambda= \partial_{\varphi,\theta}(X)$.  Thus, any exact   $\theta$-almost twisted Poisson manifold is prequantizable by setting  $Z=-X$, and $\eta=0$.
\item Consider the  $\theta$-almost twisted Poisson structure   $(\Lambda,\varphi,\theta )$  on  $\mathbb{R}^5$   defined by
$\Lambda=e^f\partial_{x_1}\wedge\partial_{x_2}+e^g\partial_{x_3}\wedge\partial_{x_4}$,\ $\theta=d{x_5}$, and \\ $\varphi=-\partial_{x_1}(g)e^{-g}d{x_1}\wedge d{x_3}\wedge d{x_4}-\partial_{x_2}(g)e^{-g}d{x_2}\wedge d{x_3}\wedge d{x_4}
-e^{-g}(\partial_{x_5} g-1)d{x_3}\wedge d{x_4}\wedge d{x_5}
-\partial_{x_3}(f)e^{-f}d{x_1}\wedge d{x_2}\wedge d{x_3}-\partial_{x_4}(f)e^{-f}d{x_1}\wedge d{x_2}\wedge d{x_4}-e^{-f}(\partial_{x_5} f-1)d{x_1}\wedge d{x_2}\wedge d{x_5}$ \
where  $f,g\in C^\infty(\mathbb{R}^5)$, and $(x_1,x_2,x_3,x_4,x_5)$ is  a coordinate system in $\mathbb{R}^5$.	
Assume that $f(x_1,x_2,x_3,x_4,x_5)=f(x_1,x_2)$ and $g(x_1,x_2,x_3,x_4,x_5)=g(x_3,x_4)$. Then,  $(\mathbb{R}^5,\Lambda,\varphi,\theta )$ is prequantizable. Indeed, if we take $Z=\partial_{x_5}$, and  $\eta= -e^{-f}\partial_{x_1}\wedge\partial_{x_2}-e^{-g}\partial_{x_3}\wedge\partial_{x_4}$, the  differential 2-form  $\eta$ is closed. Since the manifold $\mathbb{R}^5$ is contractible, then, $H_{\text{dR}}^2(\mathbb{R}^5)=0$. Therefore,  the closed 2-form $\eta$ is exact and represents the trivial cohomology class, which is integral.  A calculation shows that a prequantization  equation \eqref{g8} holds.
		\end{enumerate}
	\end{remark}

	\subsection{Construction of Hilbert space}

In this section, we address the second step of the geometric quantization process for $\theta$-almost twisted Poisson manifolds: the construction of a Hilbert space from the prequantization space $\Gamma(K)$, on which a suitable Lie subalgebra of observables will be represented. We introduce the notion of a polarization adapted to this setting and construct the quantum Hilbert space by tensoring the prequantization line bundle with the bundle of complex half-densities.

	Let $(M,\Lambda,\varphi,\theta)$ be a $\theta$-almost twisted Poisson manifold. We  consider the complexification $\Omega_{\mathbb{C}}^1(M)=\Omega^1(M)\otimes \mathbb{C}$ of $\Omega^1(M)$, and extend both the bracket $[\cdot,\cdot]_{\varphi,\theta}$, and anchor map $\Lambda^\#$ to complex-valued forms. Then, the  pair $(\Omega_{\mathbb{C}}^1(M),[\cdot,\cdot]_{\varphi,\theta})$ is a complex Lie algebra.  We will define a \emph{polarization} to be a complex Lie subalgebra $\mathcal{P}$ of   $(\Omega_{\mathbb{C}}^1(M),[\cdot,\cdot]_{\varphi,\theta})$ such that
	$$\Lambda(\alpha,\beta)=0\text{ for every }\alpha,\beta\in\Omega_{\mathbb{C}}^1(M).$$
	
	Given such  a polarization $\mathcal{P}$, we  set
	\[
	P(\mathcal{P}) = \left\{ f \in C^\infty(M) \,\middle|\, [df,\alpha]_{\varphi,\theta} \in \mathcal{P},\ \forall \alpha \in \mathcal{P} \right\} \text{ and } \left(P(\mathcal{P})\right)^2=P(\mathcal{P}) \times P(\mathcal{P}).
	\]
	Let $\Delta=\Delta(P(\mathcal{P}) \times P(\mathcal{P}))=\{(f,f),\; f\in P(\mathcal{P})\}$ and
	\[
	\widetilde{P(\mathcal{P})} = \left\{ (f,g) \in \left(P(\mathcal{P})\right)^2 \setminus \Delta,\; [i_{(dg)^\#}i_{(df)^\#}\varphi+\Lambda(df,dg)\theta, \alpha]_{\varphi,\theta} \in \mathcal{P},\, \forall \alpha \in \mathcal{P} \right\}.
	\]
	Note that $\widetilde{P(\mathcal{P})}$  is symmetric with respect to  $\Delta(P(\mathcal{P}) \times P(\mathcal{P}))$.  We denote by  $\mathcal{Q}(\mathcal{P})$, the projection  of $\widetilde{P(\mathcal{P})}$ onto  $P(\mathcal{P})$.

	\begin{proposition} The pair
		$(\mathcal{Q}(\mathcal{P}),\{\cdot,\cdot\})$ is a $\theta$-almost $\varphi$-twisted Lie  subalgebra of $(C^\infty(M),\{\cdot,\cdot\})$.
	\end{proposition}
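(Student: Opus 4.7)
The proposition asserts two things: that $\mathcal{Q}(\mathcal{P})$ is stable under the Poisson bracket $\{\cdot, \cdot\}$, and that the restricted bracket satisfies the $\theta$-almost $\varphi$-twisted Jacobi identity. The second point is automatic, since the twisted Jacobiator $\{f,\{g,h\}\} + \{g,\{h,f\}\} + \{h,\{f,g\}\} = \Lambda^\#(\varphi)(df, dg, dh)$ is a pointwise identity valid on all of $C^\infty(M)$ by the structural equation $\frac{1}{2}[\Lambda, \Lambda] = \Lambda^\#(\varphi)$. So the essential content of the proposition is the closure of $\mathcal{Q}(\mathcal{P})$ under $\{\cdot, \cdot\}$.

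The crucial algebraic identity I plan to use is that, for exact 1-forms, the Koszul bracket satisfies $[df, dg]_K = d\{f, g\}$; together with the definition \eqref{g2} of the Lie-Rinehart bracket this yields
\[
d\{f, g\} = [df, dg]_{\varphi, \theta} - i_{(dg)^\#} i_{(df)^\#} \varphi - \Lambda(df, dg)\, \theta.
\]
Given $f, g \in \mathcal{Q}(\mathcal{P})$ with $f \neq g$ and any $\alpha \in \mathcal{P}$, I would apply the operator $[\cdot, \alpha]_{\varphi, \theta}$ to this identity. The term $[[df, dg]_{\varphi, \theta}, \alpha]_{\varphi, \theta}$ is rewritten, via the Jacobi identity of the Lie-Rinehart bracket, as $[df, [dg, \alpha]_{\varphi, \theta}]_{\varphi, \theta} - [dg, [df, \alpha]_{\varphi, \theta}]_{\varphi, \theta}$, which stays in $\mathcal{P}$ because $f, g \in P(\mathcal{P})$ and $\mathcal{P}$ is a Lie subalgebra. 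The remaining expression $[i_{(dg)^\#} i_{(df)^\#} \varphi + \Lambda(df, dg)\theta, \alpha]_{\varphi, \theta}$ lies in $\mathcal{P}$ by the defining condition of $\widetilde{P(\mathcal{P})}$, once one checks that $(f, g) \in \widetilde{P(\mathcal{P})}$ (which I plan to justify by using the symmetry of $\widetilde{P(\mathcal{P})}$ together with the fact that both $f$ and $g$ admit partners). This yields $\{f, g\} \in P(\mathcal{P})$.

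To conclude $\{f, g\} \in \mathcal{Q}(\mathcal{P})$, I would exhibit a partner $h \in P(\mathcal{P})$, $h \neq \{f, g\}$, with $(\{f, g\}, h) \in \widetilde{P(\mathcal{P})}$. The natural candidates are $h = f$ or $h = g$: substituting the decomposition of $d\{f, g\}$ above into the expression $i_{(dh)^\#} i_{(d\{f, g\})^\#} \varphi + \Lambda(d\{f, g\}, dh)\,\theta$, and then unpacking the resulting $[\cdot, \alpha]_{\varphi, \theta}$ via iterated Jacobi identity together with the structural equations $d\varphi = \theta \wedge \varphi$, $\Lambda^\#(\theta) = 0$, and $\frac{1}{2}[\Lambda, \Lambda] = \Lambda^\#(\varphi)$, should reduce everything to terms already controlled by membership in $\mathcal{P}$.

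The main obstacle will be precisely this last step: exhibiting and verifying a valid partner for $\{f, g\}$. While $\{f, g\} \in P(\mathcal{P})$ reduces cleanly to the Lie-Rinehart Jacobi identity on $\Omega_{\mathbb{C}}^1(M)$, membership in $\mathcal{Q}(\mathcal{P})$ requires simultaneous control of the mixed $\varphi$- and $\theta$-terms arising from the non-closure of $\varphi$. The compatibility $\Lambda^\#(\theta) = 0$ will be decisive in eliminating the $\theta$-obstructions, while $d\varphi = \theta \wedge \varphi$ controls the differential of the interior contraction term; if these do not cancel cleanly with $h = f$ or $h = g$, the fallback is to combine them linearly to construct an admissible $h$.
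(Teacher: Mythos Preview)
Your argument for $\{f,g\} \in P(\mathcal{P})$ matches the paper's: both invoke the identity $d\{f,g\} = [df,dg]_{\varphi,\theta} - \bigl(i_{(dg)^\#}i_{(df)^\#}\varphi + \Lambda(df,dg)\,\theta\bigr)$, apply the Jacobi identity of $[\cdot,\cdot]_{\varphi,\theta}$ to handle $[[df,dg]_{\varphi,\theta},\alpha]_{\varphi,\theta}$, and use the $\widetilde{P(\mathcal{P})}$-condition for the remaining term. One remark: your proposed justification that $(f,g)\in\widetilde{P(\mathcal{P})}$ via ``symmetry together with both admitting partners'' does not actually follow from the definitions---knowing that $f$ has \emph{some} partner and $g$ has \emph{some} partner says nothing about the specific pair $(f,g)$. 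The paper simply asserts this step ``by construction'', so your argument and the paper's coincide here.

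Where you diverge is in exhibiting a partner $h$ for $\{f,g\}$, which you flag as the main obstacle. You propose $h=f$ or $h=g$ and anticipate a delicate computation unwinding $i_{(dh)^\#}i_{(d\{f,g\})^\#}\varphi + \Lambda(d\{f,g\},dh)\,\theta$ via the structural equations $d\varphi=\theta\wedge\varphi$ and $\Lambda^\#(\theta)=0$, with a linear-combination fallback if that fails. The paper bypasses all of this: it takes $h$ to be any \emph{constant} function distinct from $\{f,g\}$. Then $dh=0$, the entire expression $i_{(dh)^\#}i_{(d\{f,g\})^\#}\varphi + \Lambda(d\{f,g\},dh)\,\theta$ vanishes identically, and its bracket with any $\alpha\in\mathcal{P}$ is $0\in\mathcal{P}$. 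Constants lie in $P(\mathcal{P})$ since $[0,\alpha]_{\varphi,\theta}=0$, so $(\{f,g\},h)\in\widetilde{P(\mathcal{P})}$ immediately. What you identified as the hard step thus evaporates with this choice of partner; your route may well succeed, but it is unnecessary.
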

	
	\begin{proof}
		Let $f,g \in \mathcal{Q}(\mathcal{P})$. It suffices to  show that $\{f,g\} \in \mathcal{Q}(\mathcal{P})$, meaning
		$\{f,g\} \in P(\mathcal{P})$, and there exist $h \in \mathcal{P}$ with  $h\neq\{f,g\}$ such that  $(\{f,g\},h) \in \widetilde{P(\mathcal{P})}$.
		
		Let us first prove  that  $\{f,g\} \in P(\mathcal{P})$.
		By the  definition of $[ , ]_{\varphi,\theta}$, we have
		\begin{align}\label{na4}
		d\{f,g\}&= [df,dg]_{\varphi,\theta} - \Big(i_{\Lambda^\#(dg)}i_{\Lambda^\#(df)}\varphi+\Lambda(df,dg)\theta\Big).
		\end{align}
		Then by linearity, we obtain  $$[d\{f,g\},\alpha]_{\varphi,\theta}= [[df,dg]_{\varphi,\theta},\alpha]_{\varphi,\theta}-[ i_{\Lambda^\#(dg)}i_{\Lambda^\#(df)}\varphi+\Lambda(df,dg)\theta, \alpha]_{\varphi,\theta}$$ for every $\alpha\in\mathcal{P}$. If $f=g$, then $\{f,g\}=0\in P(\mathcal{P})$.  Assume that $f\neq g$. Then  by construction, $(f,g)\in \widetilde{P(\mathcal{P})}$. Therefore, $[ i_{\Lambda^\#(dg)}i_{\Lambda^\#(df)}\varphi+\Lambda(df,dg)\theta, \alpha]_{\varphi,\theta}\in \mathcal{P}$. By using the Jacobi identity of $[ , ]_{\varphi,\theta}$, we get
		\[
		[[df,dg]_{\varphi,\theta}, \alpha]_{\varphi,\theta} = [df, [dg,\alpha]_{\varphi,\theta}]_{\varphi,\theta} + [dg, [\alpha,df]_{\varphi,\theta}]_{\varphi,\theta}.
		\]
		Since $f,g \in P(\mathcal{P})$, and  $\mathcal{P}$ is a Lie subalgebra, $[[df,dg]_{\varphi,\theta}, \alpha]_{\varphi,\theta}\in \mathcal{P}$. Therefore, $\{f,g\} \in P(\mathcal{P})$.
		\par
		Note that for every constant function $h\in C^\infty(M)$, we  have
		$[ i_{\Lambda^\#(dh)}i_{\Lambda^\#(d\{f,g\})}\varphi+\Lambda(d\{f,g\},dh)\theta, \alpha]_{\varphi,\theta}=[0,\alpha]_{\varphi,\theta}\in \mathcal{P}$.
		Thus, $(\{f,g\},h) \in \widetilde{P(\mathcal{P})}$. Therefore, $\{f,g\} \in \mathcal{Q}(\mathcal{P})$.
	\end{proof}
	The elements of $\mathcal{Q}(\mathcal{P})$  will be called the \emph{straightforwardly quantizable observables} of $(M,\Lambda,\varphi,\theta)$. This extends that given in \cite{R2}.
	
	Assume  $(M,\Lambda,\varphi,\theta)$ is  a  prequantizable  $\theta$-almost twisted Poisson manifold  with prequantization bundle $\xi:K \to M$, Hermitian metric $h$, and compatible contravariant derivative $D$ satisfying $C_D = -2\pi i \Lambda$.
	Let $\mathcal{D}$ be the  half-density bundle  associated to  a tangent bundle $TM$. Its cross sections  $\varrho$  are  called  \emph{half-density} of $M$, and   are  complex valued map on bases of $\Gamma(TM)$ satisfying:
	\[
	\varrho_x(e_xA_x) = \varrho_x(e_x)|\det A_x|^{1/2}
	\]
	for any $x \in M$, basis $e_x$ of $T_xM$, and $A_x \in \mathrm{GL}(T_xM)$.  Due to the fact that $\mathrm{GL}(T_xM)$ acts transitively on the set of basis of $T_xM$,  The bundle $\mathcal{D}$ is a complex line bundle. The Lie derivative $\mathcal{L}$ of $\varrho$ is defined as for tensor field on $M$. See \cite{R24,R25} for more details.
	
	By using \eqref{Eq4}, and  the properties of  Lie derivative $\mathcal{L}$, we can extend the contravariant derivative  $D$  to  $\operatorname{End}_{\mathbb{C}}(\Gamma(K \otimes \mathcal{D}))$ as follow
	\begin{align}\label{na1}
	D_\alpha(s \otimes \varrho)& = (D_\alpha s) \otimes \varrho + s \otimes \mathcal{L}_{\Lambda^\#(\alpha)}\varrho, \quad\quad \alpha \in \Omega_{\mathbb{C}}^1(M)\quad s \otimes \varrho \in \Gamma(K \otimes \mathcal{D}).
	\end{align}
	
	For any  $f\in C^\infty(M)$, we can also extend the  operator  $\hat{f}$  to $\operatorname{End}_{\mathbb{C}}(\Gamma(K \otimes \mathcal{D}))$ as
	\begin{align}\label{na2}
	\hat{f}(s \otimes \varrho)& = D_{df}(s \otimes \varrho) + 2\pi i f (s \otimes \varrho).
	\end{align}

	\begin{proposition}
		The extended operator satisfies the prequantization condition. That is,
		\[
		\widehat{\{f,g\}}(s \otimes \varrho) = \{\hat{f},\hat{g}\}_{\varphi,\theta}(s \otimes \varrho), \quad\quad  s \otimes \varrho \in \Gamma(K \otimes \mathcal{D}).
		\]
	\end{proposition}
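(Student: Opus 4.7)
My plan is to reduce the claim to the preceding proposition (the prequantization identity on $\Gamma(K)$) by showing that the extended contravariant derivative $D$ on $\Gamma(K\otimes\mathcal{D})$ defined by \eqref{na1} still satisfies the curvature condition $C_D=-2\pi i\Lambda$. Once that is established, the algebraic computation that produced \eqref{sl1} in the proof of the preceding proposition transposes verbatim, since the extended $\hat f$ in \eqref{na2} has exactly the same formal shape $D_{df}+2\pi if$ as before.

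The first concrete step is to evaluate the curvature of the extended $D$ on a decomposable section $s\otimes\varrho$. Expanding $D_\alpha D_\beta(s\otimes\varrho)$ via \eqref{na1} twice and subtracting the symmetric expression, the cross terms cancel by the two Leibniz rules (the one on $\Gamma(K)$ from \eqref{Eq4} and the tensorial Leibniz rule of the Lie derivative), and one obtains
\[
C_D(\alpha,\beta)(s\otimes\varrho)=\bigl(C_D(\alpha,\beta)s\bigr)\otimes\varrho+s\otimes\mathcal{L}_{[\Lambda^\#(\alpha),\Lambda^\#(\beta)]-\Lambda^\#([\alpha,\beta]_{\varphi,\theta})}\varrho,
\]
where the first $C_D$ on the right-hand side is the original curvature on $\Gamma(K)$. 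Since $\Omega^1(M)$ with bracket $[\,,\,]_{\varphi,\theta}$ and anchor $\Lambda^\#$ is a Lie-Rinehart algebra, the anchor is a Lie algebra homomorphism, so the vector field inside the Lie derivative is zero and the $\mathcal{D}$-contribution vanishes. Consequently the extended curvature equals $-2\pi i\Lambda$, i.e.\ \eqref{sl1} remains valid on $\Gamma(K\otimes\mathcal{D})$.

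The second step is to rerun the manipulation of the preceding proposition with the extended operators $\hat f,\hat g$, using \eqref{g2} and \eqref{Eq7}. Each line of that computation relied only on the Leibniz rule \eqref{Eq4} with anchor $\Lambda^\#$ (which still holds for the extended $D$, by construction) and on the curvature identity; both are now available. The contribution $\Lambda(df,dg)D_\theta$ causes no difficulty because its $\mathcal{D}$-part is $\mathcal{L}_{\Lambda^\#(\theta)}=0$ by the defining hypothesis $\Lambda^\#(\theta)=0$ of a $\theta$-almost twisted Poisson manifold, and its $K$-part is already absorbed by the previous proof.

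The main obstacle, and the one genuinely new input, is the curvature cancellation above: it depends on $\Lambda^\#$ being a Lie algebra homomorphism from $(\Omega^1(M),[\,,\,]_{\varphi,\theta})$ to $(\mathfrak{X}^1(M),[\,,\,])$ in the full $\theta$-almost twisted generality. This is a consequence of the Lie-Rinehart structure recalled from \cite{R21}, but since it is the pivotal identity that transports prequantization from $K$ to $K\otimes\mathcal{D}$, I would isolate it as a preliminary lemma (or reference it explicitly) before invoking it in the curvature step. All remaining work is the mechanical transposition of the earlier proof.
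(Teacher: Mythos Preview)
Your proposal is correct and uses the same essential ingredients as the paper: the decomposition of the extended operator into a $K$-part and a half-density part, the commutator identity $[\mathcal{L}_X,\mathcal{L}_Y]=\mathcal{L}_{[X,Y]}$, the anchor property $\Lambda^\#([\alpha,\beta]_{\varphi,\theta})=[\Lambda^\#(\alpha),\Lambda^\#(\beta)]$, and $\Lambda^\#(\theta)=0$. The only organizational difference is that you first isolate the curvature identity $C_D=-2\pi i\Lambda$ on $\Gamma(K\otimes\mathcal{D})$ and then invoke the earlier algebraic computation verbatim, whereas the paper works directly from the splitting $\hat f(s\otimes\varrho)=\hat f(s)\otimes\varrho+s\otimes\mathcal{L}_{\Lambda^\#(df)}\varrho$ and matches the $K$- and $\mathcal{D}$-contributions separately; both routes reduce to the same anchor-homomorphism step, so this is a repackaging rather than a genuinely different argument.
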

	
	\begin{proof}
		Let $s \otimes \varrho \in \Gamma(K \otimes \mathcal{D})$. Remark that from \eqref{na1}, and \eqref{na2}  we get
		
		\begin{align}\label{na3}
		\hat{f}(s \otimes \varrho ) &= (\hat{f}(c))\otimes\varrho+s\otimes\mathcal{L}_{\Lambda^{\#}(df)}\varrho.
		\end{align}
		Then, using \eqref{na3},\eqref{na4}, the properties of Lie derivative $\mathcal{L}$, and that of the anchor map $\Lambda^{\#}$, the proposition follows.
	\end{proof}
	
	Let $\mathcal{H}_0 $ be a subspace of $\Gamma(K \otimes \mathcal{D})$  defined by
	\begin{align}\label{nb2}
	\mathcal{H}_0& = \left\{ s \otimes \varrho \in \Gamma(K \otimes \mathcal{D}) \,\middle|\, D_\alpha(s \otimes \varrho) = 0,\ \forall \alpha \in \mathcal{P} \right\}.
	\end{align}
	
	By a Bohr-Sommerfeld type condition, we can assume  that $\mathcal{H}_0 \neq \{0\}$, see \cite[Page 71-72]{R26}.
	
	\begin{proposition}\label{GQprop1}
		For any $f \in \mathcal{Q}(\mathcal{P})$, and $s \otimes \varrho \in \mathcal{H}_0$, we have $\hat{f}(s \otimes \varrho) \in \mathcal{H}_0$.
	\end{proposition}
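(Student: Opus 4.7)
The plan is to verify directly that $D_\alpha\bigl(\hat f(s\otimes\varrho)\bigr)=0$ for every $\alpha\in\mathcal{P}$, which by \eqref{nb2} is exactly the condition $\hat f(s\otimes\varrho)\in\mathcal{H}_0$. All ingredients are already in place: the extended Leibniz rule on $K\otimes\mathcal{D}$ coming from \eqref{Eq4} and \eqref{na1}, the curvature definition \eqref{Eq7}, the prequantization identity $C_D=-2\pi i\Lambda$ of \eqref{g6}, the hypothesis $s\otimes\varrho\in\mathcal{H}_0$, and the defining condition $[df,\alpha]_{\varphi,\theta}\in\mathcal{P}$ of $f\in P(\mathcal{P})\supset\mathcal{Q}(\mathcal{P})$.

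First, I would expand $\hat f(s\otimes\varrho)=D_{df}(s\otimes\varrho)+2\pi i f(s\otimes\varrho)$ from \eqref{na2} and apply $D_\alpha$. The Leibniz rule splits the second summand into $2\pi i f\,D_\alpha(s\otimes\varrho)+2\pi i\,\Lambda^\#(\alpha)(f)(s\otimes\varrho)$, and the first of these vanishes because $s\otimes\varrho\in\mathcal{H}_0$. This reduces the goal to showing
\[
D_\alpha D_{df}(s\otimes\varrho)+2\pi i\,\Lambda^\#(\alpha)(f)(s\otimes\varrho)=0.
\]

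Next, I would rewrite $D_\alpha D_{df}$ using \eqref{Eq7} combined with $C_D=-2\pi i\Lambda$, which yields
\[
D_\alpha D_{df}(s\otimes\varrho)=-2\pi i\,\Lambda(\alpha,df)(s\otimes\varrho)+D_{df}D_\alpha(s\otimes\varrho)+D_{[\alpha,df]_{\varphi,\theta}}(s\otimes\varrho).
\]
The middle term vanishes because $D_\alpha(s\otimes\varrho)=0$. The last term vanishes as well: by skew-symmetry of $[\cdot,\cdot]_{\varphi,\theta}$ and $f\in P(\mathcal{P})$ we have $[\alpha,df]_{\varphi,\theta}=-[df,\alpha]_{\varphi,\theta}\in\mathcal{P}$, and then again $s\otimes\varrho\in\mathcal{H}_0$ kills it. What remains is the scalar $\bigl(-2\pi i\,\Lambda(\alpha,df)+2\pi i\,\Lambda^\#(\alpha)(f)\bigr)$ times $s\otimes\varrho$, which is zero by the anchor identity $\Lambda^\#(\alpha)(f)=\Lambda(\alpha,df)$.

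The only step that requires a word of justification is the transfer of $C_D=-2\pi i\Lambda$ from $K$ to $K\otimes\mathcal{D}$; a short computation starting from \eqref{na1} shows that the extra contribution coming from the half-density factor equals $s\otimes\mathcal{L}_{[\Lambda^\#(\alpha),\Lambda^\#(\beta)]-\Lambda^\#([\alpha,\beta]_{\varphi,\theta})}\varrho$, which vanishes because $\Lambda^\#$ is the anchor of the Lie--Rinehart algebra $(\Omega^1(M),[\cdot,\cdot]_{\varphi,\theta})$ and is therefore a Lie algebra homomorphism. Beyond this, the argument is a direct calculation from the assembled identities, with no conceptual obstacle.
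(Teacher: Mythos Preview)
Your proof is correct and follows essentially the same route as the paper: the paper compresses the entire computation into the single commutation identity $D_\alpha\bigl(\hat f(s\otimes\varrho)\bigr)=\hat f\bigl(D_\alpha(s\otimes\varrho)\bigr)-D_{[df,\alpha]_{\varphi,\theta}}(s\otimes\varrho)$, obtained from \eqref{na1}, \eqref{na2}, \eqref{Eq7} and \eqref{sl1}, and then applies $s\otimes\varrho\in\mathcal{H}_0$ together with $[df,\alpha]_{\varphi,\theta}\in\mathcal{P}$. Your argument unpacks precisely this identity step by step, and your final paragraph justifying why $C_D=-2\pi i\Lambda$ persists on $K\otimes\mathcal{D}$ (via the anchor property $\Lambda^\#([\alpha,\beta]_{\varphi,\theta})=[\Lambda^\#(\alpha),\Lambda^\#(\beta)]$) makes explicit a point the paper uses without comment.
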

	
	\begin{proof}
		Let  $\alpha \in \mathcal{P}$, and $s \otimes \varrho \in \mathcal{H}_0$. By using \eqref{na1}, \eqref{na2}, \eqref{Eq7}, and the prequantization condition \eqref{sl1}, we get
		\[
		D_\alpha(\hat{f}(s \otimes \varrho)) = \hat{f}(D_\alpha(s \otimes \varrho))-D_{[df,\alpha]_{\varphi,\theta}}(s \otimes \varrho).
		\]
		Then, we have
		\begin{align*}
		D_\alpha(\hat{f}(s \otimes \varrho))&=\hat{f}(0)-0=0.
		\end{align*}
		
		Because $[df,\alpha]_{\varphi,\theta} \in \mathcal{P}$,  since  $f \in \mathcal{Q}(\mathcal{P})$.
	\end{proof}
	Proposition~\ref{GQprop1} shows that the map: $\hat{f}\mid_{\mathcal{H}_0}: \mathcal{H}_0\longrightarrow \mathcal{H}_0$ is well-defined.
	Hence, $\mathcal{H}_0$ can be used as a \emph{quantization space} for $\mathcal{Q}(\mathcal{P})$.
	To construct a Hilbert space on $\mathcal{H}_0$, we define a scalar product on $\mathcal{H}_0$.  We distinguish two cases:
	
	\paragraph{Compact case} If $M$ is compact, we  define
	\begin{align}\label{nb1}
	\langle s_1 \otimes \varrho_1, s_2 \otimes \varrho_2 \rangle = \int_M h(s_1,s_2) \varrho_1 \bar{\varrho}_2
	\end{align}
	where $h$ is  a Hermitian metric on  $\xi: K\longrightarrow M$, and the bar denotes  the complex conjugation.
	This integral converges due to compactness, making $\mathcal{H}_0$   a pre-Hilbert space.
	\begin{proposition}
		The operators $\hat{f}$ for $f \in \mathcal{Q}(\mathcal{P})$ are anti-Hermitian with respect to this scalar product.
	\end{proposition}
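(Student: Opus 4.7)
The plan is to show that for every $f\in\mathcal{Q}(\mathcal{P})$ and every $s_1\otimes\varrho_1,\,s_2\otimes\varrho_2\in\mathcal{H}_0$, one has
$\langle \hat f(s_1\otimes\varrho_1),s_2\otimes\varrho_2\rangle+\langle s_1\otimes\varrho_1,\hat f(s_2\otimes\varrho_2)\rangle=0.$
First I would expand $\hat f$ using \eqref{na3}, writing
\[
\hat f(s_i\otimes\varrho_i)=(D_{df}s_i)\otimes\varrho_i+s_i\otimes\mathcal{L}_{\Lambda^\#(df)}\varrho_i+2\pi i\,f\,(s_i\otimes\varrho_i),
\]
and plug this into each of the two inner products, so that the required sum splits into three blocks: a block involving $D_{df}s_i$, a block involving $\mathcal{L}_{\Lambda^\#(df)}\varrho_i$, and a block involving the multiplication by $2\pi i f$.

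Next I would deal with the multiplication block. Since $f$ is a real-valued function and the Hermitian metric $h$ is conjugate-linear in the second argument, we have $h(s_1,2\pi i f s_2)=-2\pi i f\,h(s_1,s_2)$, so the two scalar multiplication contributions $2\pi i\!\int f h(s_1,s_2)\varrho_1\bar\varrho_2$ and $-2\pi i\!\int f h(s_1,s_2)\varrho_1\bar\varrho_2$ cancel exactly. For the remaining two blocks I would combine them pointwise: using compatibility \eqref{Eq6} of $D$ with $h$, the integrand in the $D_{df}$-block becomes
\[
\bigl[h(D_{df}s_1,s_2)+h(s_1,D_{df}s_2)\bigr]\varrho_1\bar\varrho_2=\Lambda^\#(df)\bigl(h(s_1,s_2)\bigr)\,\varrho_1\bar\varrho_2,
\]
and using the Leibniz rule for the Lie derivative on half-densities together with the reality of $\Lambda^\#(df)$ (so that $\mathcal{L}_{\Lambda^\#(df)}\bar\varrho_2=\overline{\mathcal{L}_{\Lambda^\#(df)}\varrho_2}$), the $\mathcal{L}$-block becomes $h(s_1,s_2)\,\mathcal{L}_{\Lambda^\#(df)}(\varrho_1\bar\varrho_2)$. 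Adding the two, one Leibniz step yields
\[
\mathcal{L}_{\Lambda^\#(df)}\bigl(h(s_1,s_2)\,\varrho_1\bar\varrho_2\bigr),
\]
where the expression inside the Lie derivative is a genuine complex-valued density on $M$.

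The final step is to invoke the standard fact that on a compact manifold without boundary the integral of the Lie derivative of a density vanishes, so that $\int_M\mathcal{L}_{\Lambda^\#(df)}(h(s_1,s_2)\varrho_1\bar\varrho_2)=0$. Combined with the cancellation of the multiplication block, this gives anti-Hermiticity. The only real subtlety, and therefore the main obstacle I expect, is the careful bookkeeping of conventions: conjugate-linearity of $h$ in the second slot, commutation of complex conjugation with $\mathcal{L}_X$ for real $X$, and the divergence-type identity $\int_M\mathcal{L}_X\mu=0$ for a density $\mu$ (which follows from writing $\mu$ locally as $|\omega|$ for a top-form $\omega$ and applying Stokes' theorem, modulo orientation). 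Once these ingredients are lined up, the computation collapses cleanly as described.
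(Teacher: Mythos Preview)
Your proposal is correct and follows essentially the same route as the paper's proof: expand $\hat f$ via \eqref{na3}, use the compatibility \eqref{Eq6} (together with the cancellation of the $2\pi i f$ contributions, which the paper leaves implicit) to turn the integrand into $\mathcal{L}_{\Lambda^\#(df)}\bigl(h(s_1,s_2)\varrho_1\bar\varrho_2\bigr)$, and then invoke the density version of Stokes' theorem on the compact manifold. Your write-up is in fact a bit more explicit than the paper about why the $2\pi i f$ terms cancel and about the conjugation/Leibniz bookkeeping for half-densities.
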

	
	\begin{proof}
		Let $s_1 \otimes \varrho_1,s_2 \otimes \varrho_2\in \Gamma(K \otimes \mathcal{D})$.  According to \eqref{na3}, \eqref{nb1},  and  \eqref{Eq6}, we get
		\begin{align}\nonumber
		\langle \hat{f}(s_1 \otimes \varrho_1), s_2 \otimes \varrho_2 \rangle&+\langle s_1 \otimes \varrho_1,\hat{f}( s_2 \otimes \varrho_2) \rangle\nonumber\\ & \hspace{-0.375cm}= \langle\hat{f}(s_1)\otimes\varrho_1+s_1\otimes\mathcal{L}_{\Lambda^\#(df)}\varrho_1,s_2 \otimes \varrho_2\rangle\nonumber\\&+\langle s_1 \otimes \varrho_1,\hat{f}(s_2)\otimes\varrho_2+s_2\otimes\mathcal{L}_{\Lambda^\#(df)}\varrho_2\rangle
		\nonumber\\ &\hspace{-0.375cm}=
		\int_M\Big( h(\hat{f}(s_1),s_2) +h(s_1,\hat{f}(s_2) )\Big) \varrho_1 \bar{\varrho}_2\nonumber\\ & +
		 \int_Mh(s_1,s_2)\Big((\mathcal{L}_{\Lambda^\#(df)}\varrho_1)\bar{\varrho_2}+\varrho_1(\mathcal{L}_{\Lambda^\#(df)}\bar{\varrho_2})\Big)\nonumber\\ &\hspace{-0.375cm}=
		\int_M\Lambda^\#(df)\Big(h(s_1,s_2)\Big) \varrho_1 \bar{\varrho}_2
		\nonumber \\& +\int_Mh(s_1,s_2)\Big((\mathcal{L}_{\Lambda^\#(df)}\varrho_1)\bar{\varrho_2}+\varrho_1(\mathcal{L}_{\Lambda^\#(df)}\bar{\varrho_2})\Big)
		\nonumber \\ &\hspace{-0.375cm}=
		\int_M\mathcal{L}_{\Lambda^\#(df)}\Big(h(s_1,s_2)\varrho_1\bar{\varrho_2}\Big)
		=0\label{GQeq1}
		\end{align}
		where equality (\ref{GQeq1}) comes from the density version of Stokes' Theorem, see\cite{R27}.
	\end{proof}
	
	The Hilbert space $\mathcal{H}$ is the completion of $\mathcal{H}_0$. To obtain Hermitian operators representing quantum observables, we multiply the anti-Hermitian operators $\hat{f}$ by the imaginary unit $i$. This transformation scales the prequantization condition \eqref{g4} by the same constant factor.
	
	\paragraph{Non-Compact case} If $M$ is not compact, a more involved procedure is required to define a finite scalar product. We consider the real Lie subalgebra $\mathcal{P}_0$ of $(\Omega^1(M),[ , ]_{\varphi,\theta})$ whose complexification is  $\mathcal{P}\cap\bar{\mathcal{P}}$. We have that $\Lambda(\alpha,\beta)=0$ for all $\alpha,\beta\in\mathcal{P}_0$. We will assume that the distribution $\Lambda^{\#}(\mathcal{P}_0)$ defines a regular foliation $\mathcal{F}$ on $M$ with a Hausdorff leaf space $N=M/\mathcal{F}$.
	Under these conditions, the Hamiltonian vector field  $\Lambda^{\#}(df)$ of any observable $f\in\mathcal{Q}(\mathcal{P})$ is projectable  with respect to $\Lambda^{\#}(\mathcal{P}_0)$ onto $N$ (we have, $[\Lambda^{\#}(df),\Lambda^{\#}(\alpha)]=\Lambda^{\#}([df,\alpha]_{\varphi,\theta})\in\mathcal{P}_0,\forall \alpha\in\mathcal{P}_0)$. This allows us to restrict our attention to a specific type of half-densities on $M$. Specifically, we consider those that are pullbacks $\varpi^*\varrho_N$ of half-densities $\varrho_N$ on the leaf space $N$ via the canonical projection $\varpi: M\longrightarrow N$. For such sections, the action of the quantum operator simplifies to:
	\begin{align*}
	\hat{f}(s\otimes\varpi^*\varrho_N)&=\hat{f}(s)\otimes\varpi^*\varrho_N+s\otimes\mathcal{L}_{\Lambda^{\#}(df)}\varpi^*\varrho_N\\
	&= \hat{f}(s)\otimes\varpi^*\varrho_N+s\otimes\varpi^*(\mathcal{L}_{\varpi_*\Lambda^{\#}(df)}\varrho_N).
	\end{align*}
	We have that   $\mathcal{L}_{\Lambda^{\#}(\alpha)}\varpi^*\varrho_{iN}=0$, for all $\alpha\in \mathcal{P}_0$, i=1,2, since $\varpi^*\varrho_{1N},\varpi^*\varrho_{2N}$ are projectable on $N$. Using this fact, \eqref{Eq6}, \eqref{na1}, and \eqref{nb2}, we show that the density  $h(s_1,s_2)\varpi^*\varrho_{1N}\varpi^*\varrho_{2N}$ is projectable on $N$.
	Consequently, the scalar  product of two projectable sections $s_1\otimes\varpi^*\varrho_{1N},s_2\otimes\varpi^*\varrho_{2N}\in \mathcal{H}_0$ involves the projection of the quantity $h(s_1,s_2)\varpi^*\varrho_{1N}\varpi^*\varrho_{2N}$ to a 1-density $\delta_N$ on $N$. The scalar product is then given by integration over $N$:
	$$\langle s_1\otimes\varpi^*\varrho_{1N},s_2\otimes\varpi^*\varrho_{2N}\rangle=\int_N \delta_N.$$ By considering the subspace $\mathcal{H}^c_0\subset\mathcal{H}_0$ of projectable  sections with compact support in $N$, we obtain a pre-Hilbert space. The operators $\hat{f}$ remain anti-Hermitian  with respect to this scalar product. The Hilbert space of quantum states is finally constructed by completing  $\mathcal{H}^c_0$ and defining Hermitian operators as $i\hat{f}$, analogous to the compact case.\\[0.2cm]

	\begin{example}[Example on $\mathbb{R}^5$]
	Consider the prequantizable  $\theta$-almost twisted Poisson manifold $(\mathbb{R}^5,\Lambda,\varphi,\theta)$ of Remark~\ref{nb3} with additional conditions as follows:
	$\Lambda = e^f\partial_{x_1}\wedge\partial_{x_2}+e^g\partial_{x_3}\wedge\partial_{x_4}$,
	$\varphi = e^{-g}d{x_3}\wedge d{x_4}\wedge d{x_5}+e^{-f}d{x_1}\wedge d{x_2}\wedge d{x_5}$, and  $\theta = dx_5$
	where $f=f(x_1,x_2)$, and $g=g(x_3,x_4)$.  As we have seen, the solution of \eqref{g8} is $(Z,\eta)$ where
	$Z=\partial_{x_5}$, and $\eta=-e^{-f}\partial_{x_1}\wedge\partial_{x_2}-e^{-g}\partial_{x_3}\wedge\partial_{x_4}$.
	
	The prequantization bundle  of $(\mathbb{R}^5,\Lambda,\varphi,\theta)$ is the trivial complex line Bundle
	$K = \mathbb{R}^5 \times \mathbb{C}$ with the usual Hermitian metric $h$ and the contravariant derivative $D$ defined by
	\begin{align}\label{nb5}
	D_\alpha s &= \Lambda^\#(\alpha)s,
	\end{align}
	for all $\alpha\in \Omega^1(\mathbb{R}^5),s\in\Gamma(\mathbb{R}^5\times\mathbb{C})=C^\infty(\mathbb{R}^5,\mathbb{C})$.
	We can identify $\mathbb{R}^5$ to $\mathbb{C}^2\times\mathbb{R}$. Then, we set:  $z_k=x_{2k-1}+ix_{2k}$ and $\bar{z_k}=x_{2k-1}-ix_{2k}$   with $i=1,2$.
	This gives  $dx_{2k}=\dfrac{1}{2}(dz_k+d\bar{z_k})$,  $dx_{2k-1}=-\dfrac{i}{2}(dz_k-d\bar{z_k})$, $\partial_{x_{2k-1}}=\partial_{z_{k}}+
	\partial_{\bar{z_{k}}}$, and $\partial_{x_{2k}}=i(\partial_{z_{k}}-
	\partial_{\bar{z_{k}}})$. Thus, in the complex coordinates $(z_1,z_2,t)$ of $\mathbb{R}^5$, the pair $(\Lambda,\varphi)$ is written as follow:
	\begin{align*}
	\Lambda&=-2i[e^f\partial_{z_1}\wedge\partial_{\bar{z_1}}+e^g\partial_{z_2}\wedge\partial_{\bar{z_2}}],
	\end{align*}
	\begin{align*}
	\varphi&=-\dfrac{i}{2}[e^{-f}dz_1\wedge d\bar{z_1}+e^{-g}dz_2\wedge d\bar{z_2}]\wedge dt.
	\end{align*}
	A convenient complex polarization  of $(\mathbb{R}^5,\Lambda,\varphi,\theta)$ is  $\mathcal{P} = \operatorname{span}\{ dz_1,dz_2 \}$,
	and we see that  $P(\mathcal{P})$ consists of the functions $h\in C^\infty(\mathbb{R}^5)$ such that
	$$[dh,dz_k]_{\varphi,\theta}\in \mathcal{P},\quad\quad i=1,2.$$  After a computation, $[dh,dz_k]_{\varphi,\theta}\in \mathcal{P}$ if and only if,
	\begin{align}\label{nb4}
	\partial_{\bar{z_k}}h&=0,\quad k=1,2.
	\end{align}
	We consider $\widetilde{P(\mathcal{P})}$ the set of pair $(h_1,h_2)$ of solutions of \eqref{nb4} for that
	$$[i_{\Lambda^{\#}(dh_2)}i_{\Lambda^{\#}(dh_1)}\varphi+\Lambda(dh_1,dh_2)dt, dz_k]_{\varphi,\theta}\in\mathcal{P},\quad k=1,2.$$
	We take $\mathcal{Q}(P)$ to be the projection of $\widetilde{P(\mathcal{P})}$ on $P(\mathcal{P})$. Remark that $t\in\mathcal{Q}(P)$.\par  To determine the  quantization space $\mathcal{H}_0$, we need the bundle of complex half-densities $\mathcal{D}$ over  $\mathbb{R}^5=\mathbb{C}^2\times\mathbb{R}$.
	A basis of $\mathcal{D}$ can be written as   $\beta=|v|^{\frac{1}{2}}$  where
	$$v=dx_1\wedge\dots\wedge dx_5=-\frac{1}{4}dz_1\wedge dz_2\wedge d\bar{z_1}\wedge d\bar{z_2}\wedge dt.$$
	If we take $1$ as a basis of $K=\mathbb{R}^5 \times \mathbb{C}$, then any section $s\otimes\varrho$ of  $K\otimes\mathcal{D}$ can be written as
	$s\otimes\varrho=1\otimes(\chi\beta)$,  where $\chi\in C^\infty(\mathbb{R}^5,\mathbb{C})$.\par
	If we denote by $D$ the extension \eqref{na1} of the  Hermitian contravariant derivative $D$ given by \eqref{nb5}, we obtain
	\begin{align}
	D_{dz_k}(1\otimes(\chi\beta))&=1\otimes \mathcal{L}_{\Lambda^{\#}(dz_k)}(\chi\beta),\quad k=1,2.
	\end{align}
	Therefore, Using the fact that $\mathcal{L}_X\gamma=(\frac{1}{2}div X)\gamma$ (see, for instance,\cite{R25}), it follows that
	$$D_{dz_k}(1\otimes(\chi\beta))=0$$ if and only if
	\begin{align}\label{nb6}
	\mathcal{L}_{\Lambda^{\#}(dz_k)}(\chi)\beta+\frac{\chi}{2}div \Lambda^{\#}(dz_k)\beta&=0.
	\end{align}
	
	But $\Lambda^{\#}(dz_k)=-2ie^{A_k}\partial_{\bar{z_k}}$   and  $div\Lambda^{\#}(dz_k)=-2ie^{A_k}\partial_{\bar{z_k}}A_k$, with $A_1=f, A_2=g$.
	Hence, equation \eqref{nb5} is equivalent to $$ \partial_{\bar{z_k}}\chi+ \frac{\chi}{2}\partial_{\bar{z_k}}A_k=0.$$
	Thus the quantization space $\mathcal{H}_0$ can be identified  with the space
	$$\mathcal{H}_0=\left\{\chi\in C^\infty(\mathbb{R}^5,\mathbb{C})\mid \quad \partial_{\bar{z_k}}\chi+ \frac{\chi}{2}\partial_{\bar{z_k}}A_k=0,\quad
	\forall k=1,2\quad \right\}.$$
	Note that  $e^{-\frac{1}{2}f+t}$, and  $e^{-\frac{1}{2}g+t}$ are  elements of  $\mathcal{H}_0$. This means
	$\mathcal{H}_0$ is non-trivial.
	
	Let $h\in \mathcal{Q}(P)$, and $\chi\in \mathcal{H}_0$. Using  \eqref{g1},\eqref{na2}, we deduce that
	$$ \hat{h}(\chi)=2\pi i h\chi+\Lambda(dh,d\chi)+\dfrac{\chi}{2}div\Lambda^{\#}(dh).$$ Moreover, the scalar product of two function
	$\chi_1,\chi_2\in \mathcal{H}_0$ with compact support is $$\langle \chi_1,\chi_2\rangle=\int_{\mathbb{R}^5}\chi_1\overline{\chi_2}\beta.$$
\end{example}


\begin{thebibliography}{99}









		\bibitem{22}A. Chatzistavrakidis and G. Šimunic,\textit{Gauged Sigma Models with nonclosed 3-Form and twisted Jacobi structure}. J.High Energ. Phys. 2020 (11),1-33, 2020.
\bibitem{R4} A. Lichnerowicz, \textit{Les vari\'{e}t\'{e}s de Poisson et leurs alg\`{e}bres de Lie associ\'{e}es}, J. Differential Geometry 12 (2), 253-300,1977.
		\bibitem{RN}
		A. Nijenhuis, \textit{Jacobi-type identities for bilinear differential concomitants of certain tensor fields}, I, II, Nederl. Akad. Wetensch. Proc. Ser. A. 58 = Indag. Math. 17 (1955), 390-397,
		398-403.
		\bibitem {R22}B. Kostant, \textit{Quantization and Unitary Representations}, Lecture in modern analysis and applications III, 87-208, 2006.
		\bibitem{R2} F. Petalidou, \textit{On the geometric quantization of twisted Poisson manifolds},J. Math. Phys. 48 (8), 2007.
\bibitem{R19} I. Vaismann, \textit{Lectures on the Geometry of Poisson Manifolds}, Birkh\"auser, 2012.
		\bibitem{RS} J. A. Schouten, \textit{On the differential operators of first order in tensor calculus}, Convegno Internazionale di Geometria Differenziale, Italia, 1953, Edizioni Cremonese, Roma,1-7, 1954
		\bibitem{R7} J. Huebschmann, \textit{Poisson cohomology and quantization}, De gruyter 1990(408), 57-113,1990.
		\bibitem{R26}J. \'{S}niatycki,\textit{Geometric Quantization and Quantum Mechanics}, Springer Science and Business Media, 2012.
		
		\bibitem{R25}K. Yano, \textit{The theory of Lie derivatives and its applications},North Holland Publ.,
		Amsterdam, 1957.
		\bibitem{R9} M. Crainic, R. L. Fernandes, I. Marcut,  \textit{Lectures on Poisson Geometry, Graduate Studies in Mathematics}, Graduate Studies in mathematics 217, Amer. Math.Soc,2021.
		\bibitem{R11} P. \~Severa and A. Weinstein, \textit{ Poisson geometry with a 3-form background}, Non-
		commutative geometry and string theory, Prog. Theoret. Phys. Suppl.144,2001,145-154.
		\bibitem{R24} S. Bates, A. Weinstein,\textit{ Lectures on the Geometry of Quantization}  (A.M.S.,
		Berkeley Mathematics Lecture Notes 8, Amer.Math.Soc., 1997.
		\bibitem{R23}
		S. Kobayasshi,K. Nomizu,\textit{Foundations of Differential Geometry},volume 1, Wiley-Interscience, New
		York, 1969.
		\bibitem{R27} S. Sternberg,  \textit{Lectures on Differential Geometry }(Prentice-Hall, Englewood Cliffs,
		1964.
		
		\bibitem{R20} Y. Hirota,  \textit{Geometric Morita equivanlence of twisted Poisson manifolds},\ Tokyo J.Math. 31 (1) 185-193, 2008.
		
		
		
	\end{thebibliography}
\end{document}